\numberwithin{equation}{section}
\newtheorem{theorem}{Theorem}
\numberwithin{theorem}{section}
\newtheorem{proposition}[theorem]{Proposition}
\newtheorem{lemma}[theorem]{Lemma}
\newtheorem{corollary}[theorem]{Corollary}
\newtheorem{assumption}[theorem]{Assumption}
\newtheorem{ex}[theorem]{Example}
\newtheorem{rem}[theorem]{Remark}
\newtheorem{alg}[theorem]{Algorithm}
\newenvironment{example}{\begin{ex}\rm}{\end{ex}}
\newenvironment{remark}{\begin{rem}\rm}{\end{rem}}
\newenvironment{algorithm}{\begin{alg}\rm}{\end{alg}}
\newcounter{FNC}[page]
\def\fauxfootnote#1{{\addtocounter{FNC}{2}$^\fnsymbol{FNC}$%
     \let\thefootnote\relax\footnotetext{\Maroon{$^\fnsymbol{FNC}$#1}}}}
\def\rgbColor#1#2{#2}
\newcommand{\defcolor}[1]{\Blue{#1}}
\newcommand{\demph}[1]{{\it \defcolor{#1}}}
\newcommand\codim{\operatorname{codim}}
\newcommand\ubc{\mbox{\rm ubc}}
\newcommand{\calA}{{\mathcal A}}
\newcommand{\calB}{{\mathcal B}}
\newcommand{\calV}{{\mathcal V}}
\newcommand{\oD}{\overline{\Delta}}
\newcommand{\op}{\overline{p}}
\newcommand{\oy}{\overline{y}}
\newcommand{\C}{\mathbb{C}}
\newcommand{\R}{\mathbb{R}}
\newcommand{\Z}{{\mathbb Z}}
\title[Software for the Gale transform of fewnomial systems]%
  {Software for the Gale transform of fewnomial systems and a Descartes rule for fewnomials}
\author[D.~J.~Bates]{Daniel J.~Bates}
\address{Department of Mathematics\\
         Colorado State University\\
         Fort Collins, CO 80523-1874 \\
         USA}
\email{bates@math.colostate.edu}
\urladdr{http://www.math.colostate.edu/~bates/}
\author[J.~D.~Hauenstein]{Jonathan D.~Hauenstein}
\address{Department of Applied \& Computational Mathematics \& Statistics\\
         University of Notre Dame\\
         Notre Dame, IN  46556\\         
         USA}
\email{hauenstein@nd.edu}
\urladdr{http://www.nd.edu/~jhauenst}
\author[M.~E.~Niemerg]{Matthew E. Niemerg}
\address{Institute for Interdisciplinary Information Sciences\\
04-603 FIT Building\\
Tsinghua University\\
Beijing, China 100084
}
\email{research@matthewniemerg.com}
\urladdr{http://www.matthewniemerg.com}
\author[F.~Sottile]{Frank Sottile}
\address{Department of Mathematics\\
         Texas A\&M University\\
         College Station, TX 77843\\
         USA}
\email{sottile@math.tamu.edu}
\urladdr{http://www.math.tamu.edu/\~{}sottile/}
\thanks{Bates and Niemerg supported by NSF grants DMS-0914674 and DMS-1115668.}
\thanks{Niemerg supported in part by the
National Basic Research Program of China Grants 2011CBA00300, 2011CBA00301, 
the Natural Science Foundation of China Grants 61044002, 61361136003.}
\thanks{Hauenstein supported by NSF grant DMS-1262428, DARPA YFA, and Sloan Research Fellowship.}
\thanks{Sottile supported by the NSF grants DMS-0915211 and DMS-1001615.}  
\thanks{All authors supported by Institut Mittag-Leffler.}
\keywords{fewnomial, Khovanskii--Rolle, Descartes' rule, Gale duality, numerical continuation, polynomial system,  
  numerical algebraic geometry, real algebraic geometry} 
\subjclass[2010]{14P99, 65H10, 65H20}
\begin{document}

\begin{abstract}
 We give a Descartes'-like bound on the number of positive solutions to a system of fewnomials that holds when
 its exponent vectors are not in convex position and a sign condition is satisfied.
 This was discovered while developing algorithms and software for computing the Gale transform of a fewnomial
 system, which is our main goal.
 This software is a component of a package we are developing for Khovanskii-Rolle continuation,
 which is a numerical algorithm to compute the real solutions to a system of fewnomials.
\end{abstract}

\maketitle

Determining the {\it real} solutions to a system of polynomial equations is a challenging
problem with real-world applications.
Typically, few of the complex solutions are real and 
even fewer satisfy meaningful constraints such as positivity.
Methods based on symbolic computation~\cite{RUR} or numerical
homotopy continuation~\cite{BertiniBook,SW05} first find all complex solutions.  It is an active
area of research to develop methods that find only the meaningful real solutions.
Among these are exclusion methods~\cite{Ge01,Ge03}, methods based on semidefinite
programming~\cite{LLR}, the critical point method~\cite{H,RRS}, and one proposed by two of
us~\cite{BS10} that is based on the Khovanskii-Rolle Theorem~\cite{Kh91} from the theory of
fewnomials. 

A system of polynomial equations with few monomials is a \demph{fewnomial system}.
Such a system enjoys bounds on its number of real solutions independent of its number of complex 
solutions~\cite{BBS,BS,Kh91}. 
Khovanskii-Rolle continuation is a symbolic-numeric method that can find all
positive real solutions of a fewnomial system without first finding all complex solutions.
In fact, it is the first algorithm for computing positive solutions with complexity 
depending primarily on the fewnomial bound on the number of real solutions, rather than the number
of complex solutions or the ambient dimension.
The core algorithm was described in~\cite{BS10}, along with a 
proof of concept implementation.

Khovanskii-Rolle continuation rests upon the fundamental notion of 
\demph{Gale duality for polynomial systems}, which is a scheme-theoretic isomorphism between 
complete intersections of Laurent polynomials in an algebraic torus and complete intersections
of master functions in the complement of a hyperplane
arrangement~\cite{BS_Gale}. 
The support of the polynomials annihilates the weights of the master functions,
whence the term Gale duality.

The Khovanskii-Rolle continuation algorithm finds all real solutions to a system of master
functions.  Computing approximations to the real solutions of a fewnomial system $F$ requires that $F$ 
be converted into its dual Gale system $G$ which is solved using Khovanskii-Rolle continuation, 
then the numerical approximations to solutions of $G$ are converted back to
numerical approximations to solutions of the original fewnomial system $F$.
Symbolic choices made in constructing the dual system affect the performance of the Khovanskii-Rolle continuation
algorithm.   

One such choice is that of a basis for the nullspace of the matrix of exponent vectors of
the monomials of $F$.
When the nullspace meets the positive orthant, a bound lower than the fewnomial bound holds.
Choosing a positive basis element results in far fewer paths to be followed in the Khovanskii-Rolle continuation 
algorithm.   
This happens when the exponent vectors of the original polynomial system are not in convex
position and a sign condition holds on the coefficient matrix.
Such sign conditions are also found in recent work giving very strong bounds on positive
solutions~\cite{BD,Craciun,MFRCSD} and are considered to be multivariate versions of Descartes' rule of signs.

In addition to this new Descartes'-like bound,
we make explicit the algorithms for both the creation 
of $G$ from $F$ and the computation of solutions of $F$ from solutions of $G$.
These steps were described 
existentially in~\cite{BS_Gale}.
We have implemented these algorithms in a software package {\tt galeDuality}, which is
available at each authors' website.
This will be the front end for a software package that we are developing for Khovanskii-Rolle continuation.

Section~\ref{S:KhRo} gives a description of Gale duality and sketches Khovanskii-Rolle continuation,
illustrating it through an extended example.
In Section~\ref{S:bound}, we present our Descartes'-like bound for fewnomial systems and establish a key lemma about
the location of points to be tracked in the algorithm.
Explicit pseudocode for using Gale duality to compute master functions
(Algorithm~\ref{alg:wrapping}) and to transform approximations to solutions of the Gale system into
approximations to solutions of the fewnomial system (Algorithm~\ref{alg:unwrapping}) is provided
in Section~\ref{S:algos}, along with details for portions of these algorithms.
One subroutine of Algorithm~\ref{alg:wrapping} requiring more care is covered in Subsection~\ref{S:null}. 
Finally, a brief description of our software package is provided in Section~\ref{S:software}.

%
\section{Khovanskii-Rolle continuation and the Gale transform}\label{S:KhRo}
%

We give an example of Gale duality for polynomial systems and the Khovanskii-Rolle
continuation algorithm.
For a complete treatment, see~\cite{BS10,BS_Gale}.

%
\subsection{An example of Gale duality}\label{SS:Gale_algebra}

Consider the system of Laurent polynomials,
 \begin{eqnarray}
    v^2w^3 - 11uvw^3   - 33uv^2w + 4v^2w + 15u^2v + 7 &=& 0\,,\nonumber\\
    v^2w^3\hspace{53pt}+\hspace{5.6pt}5uv^2w 
         - 4v^2w -\hspace{5.6pt}3u^2v + 1 &=& 0\,,\label{Eq:system1}\\
    v^2w^3 - 11uvw^3   - 31uv^2w + 2v^2w + 13u^2v + 8 &=& 0\,.\nonumber
 \end{eqnarray}
This system has 20 complex solutions.
Six are real and three are positive,
\[ 
  ( 1.194,  0.374,  1.231)\,,\ 
  ( 0.431,  0.797,  0.972)\,,\ 
  ( 0.613,  0.788,  0.850)
  \,.
\]
Solving the equations in~\eqref{Eq:system1} for the monomials $v^2w^3$, $v^2w$, and $uvw^3$, 
gives
 \begin{eqnarray}
    v^2w^3 &=&               1    - u^2v - uv^2w\,,\nonumber\\   \label{Eq:system2}
    v^2w   &=&       \tfrac{1}{2} - u^2v + uv^2w\,,\qquad\mbox{and}\\
   uv  w^3 &=&\tfrac{10}{11}(1    + u^2v - 3uv^2w)\,.\nonumber
 \end{eqnarray}
Since 
 \begin{eqnarray*}
  \left(uv^2w\right)^3 \cdot \left(v^2w^3\right) &=& u^3v^8w^6\ =\ 
   \left(u^2v\right)\cdot \left(v^2w\right)^3\cdot \left(uvw^3\right)\,
\qquad\mbox{and}\\
  \left(u^2v\right)^2 \cdot \left(v^2w^3\right)^3
   &=& u^4v^8w^9\ =\ 
  \left(uv^2w\right)^2\cdot \left(v^2w\right)\cdot \left(uvw^3\right)^2\,,
 \end{eqnarray*}
we may substitute the expressions on the right hand sides of~\eqref{Eq:system2} 
for the monomials $v^2w^3$, $v^2w$, and $uvw^3$ in these expressions to obtain the system 
 \begin{eqnarray*}
  \left(uv^2w\right)^3 \cdot \left(1 - u^2v - uv^2w\right)
   &=&  \left(u^2v\right)\cdot \left(\tfrac{1}{2} - u^2v + uv^2w\right)^3
       \cdot \left(\tfrac{10}{11}(1+u^2v - 3uv^2w)\right)\\
  \left(u^2v\right)^2 \cdot \left(1 - u^2v - uv^2w\right)^3
   &=& 
  \left(uv^2w\right)^2\cdot \left(\tfrac{1}{2} - u^2v + uv^2w\right)
        \cdot \left(\tfrac{10}{11}(1+u^2v - 3uv^2w)\right)^2\,.
 \end{eqnarray*}
Writing \defcolor{$x$} for $u^2v$ and \defcolor{$y$} for $uv^2w$ and solving for $0$, these become
 \begin{equation}\label{Eq:Gale1}
  \begin{array}{rcrcl}
   \Blue{g}&:=&
       y^3(1-x-y)\ \ -\ \       x(\tfrac{1}{2}-x+y)^3\left(\tfrac{10}{11}(1+x-3y)\right) &=&
        0\,,\quad\mbox{and}\\ 
   \ForestGreen{f}&:=&
        x^2(1-x-y)^3\ \ -\ \ y^2(\tfrac{1}{2}-x+y)\left(\tfrac{10}{11}(1+x-3y)\right)^2 &=&
                  0\,.\rule{0pt}{15pt} 
  \end{array}
 \end{equation}
This system also has 20 solutions with six real, outside of the five lines where
the degree one factors vanish. 
Figure~\ref{F:Gale_pic1} shows the curves the equations~\eqref{Eq:Gale1} define and the  
five lines.
\begin{figure}[htb]
   \begin{picture}(280,180)
    \put(0,0){\includegraphics[height=180pt]{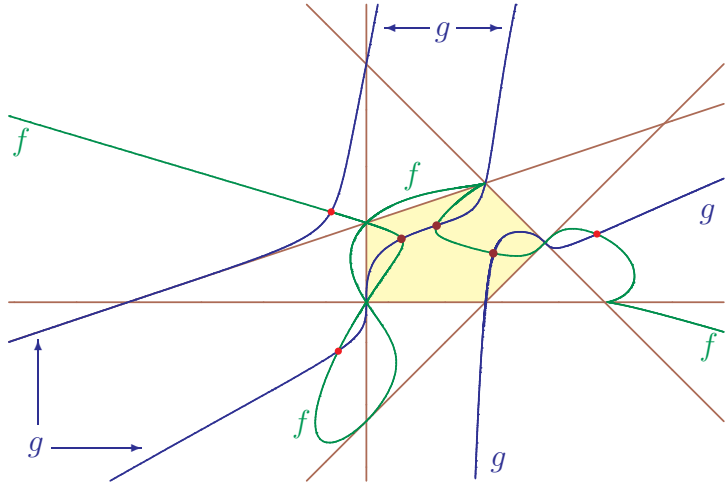}}
    \put(2,126){$\ForestGreen{f}$}    \put(108,19){$\ForestGreen{f}$}
    \put(262,48){$\ForestGreen{f}$}   \put(150,113){$\ForestGreen{f}$}
    \put( 8,12){$\Blue{g}$}  \put(183,6){$\Blue{g}$}
    \put(162,169){$\Blue{g}$}  \put(262,100){$\Blue{g}$}
    \put(17,13){\Blue{\vector(1,0){34}}}
    \put(12,22){\Blue{\vector(0,1){32}}}

    \put(160,172){\Blue{\vector(-1,0){17}}}
    \put(170,172){\Blue{\vector(1,0){17}}}

   \end{picture} 
\caption{Curves and lines}\label{F:Gale_pic1}
\end{figure}
Three solutions lie in the pentagon.
They correspond to the positive solutions of~\eqref{Eq:system1}.

This transformation from the original polynomial system~\eqref{Eq:system1} to the
system~\eqref{Eq:Gale1} is called \demph{Gale duality for polynomial systems}.
The second system has three forms which are equivalent in the pentagon.
We first rewrite~\eqref{Eq:Gale1} as a system of rational master functions
 \begin{equation}\label{Eq:Master1}
  \frac{x^2(1-x-y)^3}{y^2(\tfrac{1}{2}-x+y)\left(\tfrac{10}{11}(1+x-3y)\right)^2}
    \ =\ 
  \frac{y^3(1-x-y)}{x(\tfrac{1}{2}-x+y)^3\left(\tfrac{10}{11}(1+x-3y)\right)}
    \ =\ 1\,.
 \end{equation}
Then we take logarithms to obtain the last form.
 \begin{equation}\label{Eq:Logs1}
  \begin{array}{rcl}
   2\log(x) + 3\log(1{-}x{-}y) - 2\log(y) - \log(\tfrac{1}{2}{-}x{+}y)
   - 2 \log\left(\tfrac{10}{11}(1{+}x{-}3y)\right)& =& 0\\ \rule{0pt}{15pt}
   3\log(y) + \log(1{-}x{-}y) - \log(x) - 3\log(\tfrac{1}{2}{-}x{+}y)
   - \log\left(\tfrac{10}{11}(1{+}x{-}3y)\right)& =& 0
  \end{array}
 \end{equation}
%

%
\subsection{Geometry of Gale duality}\label{SS:Geometry}
%

Let $\R^\times = \R/\{0\}$ be the nonzero real numbers. 
Suppose that $F\colon (\R^\times)^n\to\R^n$ is given by Laurent polynomials that have a total of
$n{+}\ell$ distinct monomials, none of which is a constant.
For $b\in\R^n$, the system $F(x)=b$ of Laurent polynomials has an equivalent expression.
Let $\calA:=\{a_1,\dotsc,a_{n+\ell}\}\subset\Z^n$ be the set of $n{+}\ell$ exponents
of the monomials in $F$, which we call the \demph{support} of $F$.
Consider the map 
 \begin{equation}\label{Eq:varphi}
  \begin{array}{rcl}
   \varphi_\calA\ \colon\ (\R^\times)^n &\longrightarrow&
     (\R^\times)^{n+\ell}\ \subset\ \R^{n+\ell}\\ \rule{0pt}{14pt}
       x&\longmapsto&(x^{a_1}, x^{a_2},\dotsc,x^{a_{n+\ell}})\,.
  \end{array}
 \end{equation}
Affine-linear forms $L(z)=\lambda$ pull back to polynomials with support $\calA$, and so 
$F(x)=b$ is the pullback of a codimension $n$ (dimension $\ell$) affine linear space $H$
of $\R^{n+\ell}$ defined by $C(z)=b$, where $C\in\R^{n\times(n+\ell)}$ is the coefficient matrix of
$F$. 
We define \defcolor{$S_F$} to be the set of solutions of $F(x)=b$.  
Then,
 \begin{equation}\label{Eq:maps_between}
   \varphi_\calA(S_F)\ =\ 
   \varphi_\calA((\R^\times)^n)\cap H\,.
 \end{equation}
Let $\psi\colon\R^\ell\to H$ be any affine isomorphism.
This is given by a point $\lambda=\psi(0)\in H$ 
and $n{+}\ell$ linear forms $L_1,\dotsc,L_{n+\ell}$ on $\R^\ell$ which span the dual space of $\R^\ell$,
 \begin{equation}\label{Eq:psi}
   \psi(y)\ =\ 
    (L_1(y)+\lambda_1,\, L_2(y)+\lambda_2\,,\dotsc,\, L_{n+\ell}(y)+\lambda_{n{+}\ell})\ \in\
      H\,,\ \mbox{ for}\ y \in \R^\ell. 
 \end{equation}
The forms $L_1,\dotsc,L_{n+\ell}$ span the kernel of $C$, 
$C(L_1(y),\dotsc,L_{n+\ell}(y))=0$.  
Define 
$$\defcolor{S_G}:=\psi^{-1}(\varphi_\calA(S_F))\subset\R^\ell.$$
This set (which is isomorphic to $S_F$ as a scheme)
satisfies a simple system of equations~in~$\R^\ell$. 

Any integer linear relation with coefficients $\beta_i\in\Z$ among the exponents in $\calA$,
\[
   0\ =\ \sum_{i=1}^{n+\ell}\beta_i a_i\,,
\]
gives a monomial equation that is satisfied on $\varphi_\calA((\R^\times)^n)$ in $(\R^\times)^{n+\ell}$,
 \begin{equation}\label{Eq:toric}
   \prod_{i=1}^{n+\ell} z_i^{\beta_i}\ =\ 1\,.
 \end{equation}
This pulls back under $\psi$ to an equation involving a rational master function,
 \begin{equation}\label{Eq:Master_gale}
   \prod_{i=1}^{n+\ell} (L_i(y)+\lambda_i)^{\beta_i}\ =\ 1\,,
 \end{equation}
which is defined in the complement of the hyperplanes given by the 
affine forms $L_i(y)+\lambda_i$.

Writing elements of $\calA$ as column vectors yields an $n\times(n{+}\ell)$ integer matrix, also written~$\calA$.  
An element of the integer nullspace of $\calA$ gives a monomial
equation~\eqref{Eq:toric} vanishing on~$\varphi_\calA((\R^\times)^n)$.
Choosing a $\Z$-basis for this nullspace gives a set \defcolor{$G$} of $\ell$ equations
involving master functions~\eqref{Eq:Master_gale}, and these define the set $S_G$.
Assuming there are finitely many solutions to $F(x)=b$ in $(\C^\times)^n$, these $\ell$ monomial
equations~\eqref{Eq:toric} that cut out the torus~$\varphi_\calA((\C^\times)^n)$ in $(\C^\times)^{n+\ell}$ restrict to a
complete intersection in $H$ and any $r$ of these master functions defines a subset in the hyperplane complement of
codimension $r$.
We call this set $G$ of $\ell$ master functions a \demph{Gale system} dual to the system $F(x)=b$ of
Laurent polynomials.
We summarize some results in~\cite{BS,BS_Gale}.

\begin{proposition}\label{P:Gale}
  Let $F(x)=b$ be a system of $n$ Laurent polynomials on $(\R^\times)^n$ with support~$\calA$
  consisting of $n{+}\ell$ nonzero exponents of monomials expressed as a matrix
  \mbox{$\calA\in\Z^{n\times(n{+}\ell)}$} and finitely many zeroes in $(\C^\times)^n$. 
  Necessarily the exponent vectors span a full rank sublattice $\Z\calA$ of $\Z^n$.
  Let $\varphi_{\calA}$ be the map~$\eqref{Eq:varphi}$ and $C\in\R^{n\times(n+\ell)}$ be the 
  coefficient matrix of $F$.
  The kernel of $\varphi_\calA$ is the Pontryagin dual to $\Z^n/\Z\calA$.

  A Gale system dual to $F(x)=b$ is given by a choice of a basis 
  $\calB=\{\beta^{(1)},\dotsc,\beta^{(\ell)}\}\subset\Z^{n{+}\ell}$ for the integer
  nullspace of $\calA$, a choice of a basis $L:=\{L_1,\dotsc,L_{n{+}\ell}\}$ for the kernel of 
  $C^T$, and a choice of a point $\lambda\in\R^{n{+}\ell}$ with $C^T\lambda=b$.
  Considering each $L_i$ as a linear map on~$\R^{\ell}$, the system of master functions Gale
  dual to the Laurent system $F(x)=b$ is
 \begin{equation}\label{Eq:PGD}
   \prod_{i=1}^{n{+}\ell} (L_i(y)+\lambda_i)^{\beta^{(j)}_i}\ =\ 1
   \qquad j=1,\dotsc,\ell\,.
 \end{equation}
  When $\Z\calA$ has odd index in $\Z^n$, 
  the solutions to $F(x)=b$ in $(\R^\times)^n$ are scheme-theoretically isomorphic to the solutions
  to~$\eqref{Eq:PGD}$ in the complement of the hyperplane arrangement defined by 
  $L_i(y)=-\lambda_i$ for $i=1,\dotsc,n{+}\ell$.
  For any $\calA$ with $\Z\calA$ of full rank, the solutions to $F(x)=b$ in $(\R{_{>0}})^n$
  correspond to solutions to~$\eqref{Eq:PGD}$ where $L_i(y)>-\lambda_i$, 
for $i=1,\dotsc,n{+}\ell$. 
  These functions~\eqref{Eq:PGD} form a regular sequence in the hyperplane complement.
\end{proposition}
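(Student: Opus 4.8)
The plan is to unwind the construction of the Gale system so that the correspondences become statements about the map $\varphi_\calA$ and the affine isomorphism $\psi$, and then handle the scheme structure, the torus case, the positive case, and the regular-sequence assertion in turn. First I would recall from~\eqref{Eq:maps_between} that $\varphi_\calA(S_F) = \varphi_\calA((\R^\times)^n)\cap H$, and that $\psi\colon\R^\ell\to H$ is an affine isomorphism, so $S_G := \psi^{-1}(\varphi_\calA(S_F))$ is cut out inside $\R^\ell$ by the pullbacks under $\psi$ of the equations defining $\varphi_\calA((\R^\times)^n)$ inside $H$. The key point is that $\varphi_\calA((\R^\times)^n)$ is the subgroup of $(\R^\times)^{n+\ell}$ annihilated by every monomial character~\eqref{Eq:toric} coming from $\beta\in\Z^{n+\ell}$ with $\calA\beta = 0$; choosing the $\Z$-basis $\calB$ for this nullspace shows that $\ell$ such equations suffice on the torus, and pulling back along $\psi$ (which substitutes $z_i = L_i(y)+\lambda_i$) yields exactly~\eqref{Eq:PGD}. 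That $L=\{L_1,\dotsc,L_{n+\ell}\}$ must be a basis of $\ker C^T$ and $\lambda$ a point with $C^T\lambda = b$ is forced by the requirement that $\psi$ parametrize the affine space $H=\{z : C z = b\}$, since $H$ has direction space $\ker C$ and the coordinate functions $z_i$ on $H$ are the restrictions of $L_i(y)+\lambda_i$ under~\eqref{Eq:psi}; here I would invoke that $C$ has rank $n$ (equivalently $\calA$ has rank $n$, the ``full rank sublattice'' remark) so that $\ker C^T$ has dimension $\ell$ as needed.

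For the scheme-theoretic isomorphism I would argue that $\varphi_\calA$ restricted to $(\R^\times)^n$ is a group homomorphism with kernel the Pontryagin dual of $\Z^n/\Z\calA$ (as stated in the proposition), which is trivial exactly when $\Z\calA = \Z^n$, and in general is a finite group of order $[\Z^n:\Z\calA]$; when this index is odd the only real point of the kernel is the identity, so $\varphi_\calA$ is injective on $(\R^\times)^n$ and in fact a scheme-theoretic isomorphism onto its image (an embedded subtorus), giving the isomorphism $S_F \cong S_G$ of schemes. The positive case is cleaner: $\varphi_\calA$ is always injective on $(\R_{>0})^n$ regardless of the index, because $\R_{>0}$ is torsion-free, and a point $z=\varphi_\calA(x)$ has all coordinates positive iff $x\in(\R_{>0})^n$; translating through $\psi$, $z_i>0$ becomes $L_i(y) > -\lambda_i$, which is the claimed condition. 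I would cite~\cite{BS_Gale,BS} for the scheme-theoretic strengthening rather than reprove it.

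The last sentence — that the functions~\eqref{Eq:PGD} form a regular sequence in the hyperplane complement $M := \{y\in\R^\ell : L_i(y)\neq -\lambda_i \ \forall i\}$ — is where I expect the real work to be, and I would deduce it from the finiteness hypothesis. The idea: Gale duality identifies the zero locus of any $r$ of these master functions with the intersection of $\varphi_\calA((\C^\times)^n)$ with $H$ and with the $r$ corresponding monomial hypersurfaces~\eqref{Eq:toric} inside $(\C^\times)^{n+\ell}$; since the $\ell$ monomial equations from a $\Z$-basis of the nullspace cut the $(n+\ell)$-dimensional torus down to the $n$-dimensional subtorus $\varphi_\calA((\C^\times)^n)$ transversally (the basis vectors being linearly independent over $\Q$), intersecting with the codimension-$n$ plane $H$ gives, by the finiteness assumption, a zero-dimensional scheme — i.e.\ the full system is a complete intersection. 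Restricting attention to any subset of $r$ of the equations then cuts out a subscheme of the hyperplane complement of pure codimension $r$, and a sequence of equations each of which drops the dimension by exactly one at every point of the common zero locus is by definition a regular sequence in the local rings; this is exactly the statement in~\cite{BS_Gale} that the master functions form a regular sequence. The subtlety to be careful about is that this requires the zero locus to stay away from the hyperplanes $L_i(y) = -\lambda_i$ (so that the master functions are genuine regular functions there, not just rational) — but this is automatic since solutions of~\eqref{Eq:PGD} have all $L_i(y)+\lambda_i \neq 0$, being factors of an expression equal to $1$. I would therefore present this final claim as a consequence of the cited Gale duality results~\cite{BS,BS_Gale} together with the standing finiteness hypothesis, rather than developing the commutative-algebra argument from scratch.
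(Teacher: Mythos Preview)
The paper does not prove this proposition at all: it is introduced with the sentence ``We summarize some results in~\cite{BS,BS_Gale}'' and then simply stated, with no accompanying argument. Your proposal is therefore already more than the paper offers --- you give a reasonable outline of why the construction works, correctly identifying the roles of $\varphi_\calA$ and $\psi$, the reason the odd-index hypothesis forces injectivity on $(\R^\times)^n$ (real $2$-torsion only), and the mechanism by which finiteness yields the regular-sequence claim --- and you end up deferring to exactly the same references~\cite{BS,BS_Gale} for the scheme-theoretic and regular-sequence assertions, which is precisely what the paper does. One small caution: the clause ``basis $L$ for the kernel of $C^T$'' in the statement is dimensionally awkward (if $C$ has full row rank $n$ then $\ker C^T=0$); what is really meant, and what your discussion correctly unwinds, is that the columns of the $(n{+}\ell)\times\ell$ matrix whose rows are the $L_i$ form a basis of $\ker C\subset\R^{n+\ell}$, so that $y\mapsto(L_1(y),\dotsc,L_{n+\ell}(y))$ parametrizes the direction space of $H$. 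Your line ``so that $\ker C^T$ has dimension $\ell$'' inherits this confusion from the statement and should be rephrased accordingly.
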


The effect of the choices of $\calB$, $L$, and $\lambda$ on the Gale system and its solutions
may be deduced from Proposition~\ref{P:Gale}.
\begin{corollary}
\label{C:basicChange}
 Changing the basis $\calB$ of the integer nullspace of $\calA$ will change the Gale system, but
 not its solutions in $\R^{\ell}$ or the hyperplane arrangement.
 Changing the basis $L$ and the point $\lambda$ has the effect of an affine-linear change of
 coordinates in $\R^\ell$. 
\end{corollary}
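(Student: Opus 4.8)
The plan is to deduce the corollary directly from Proposition~\ref{P:Gale} by tracking how each of the three ingredients $\calB$, $L$, and $\lambda$ enters the Gale system~\eqref{Eq:PGD} and the hyperplane arrangement $\{L_i(y)=-\lambda_i\}$. For the change of $\calB$, I would write two $\Z$-bases of the integer nullspace of $\calA$ as $\calB=\{\beta^{(1)},\dots,\beta^{(\ell)}\}$ and $\calB'=\{\gamma^{(1)},\dots,\gamma^{(\ell)}\}$, related by a matrix $M\in\mathrm{GL}_\ell(\Z)$ via $\gamma^{(j)}=\sum_{k=1}^{\ell} M_{jk}\beta^{(k)}$. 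Substituting into~\eqref{Eq:PGD}, the $j$-th master function for $\calB'$ is $\prod_i (L_i(y)+\lambda_i)^{\gamma^{(j)}_i}=\prod_{k=1}^{\ell}\bigl(\prod_i (L_i(y)+\lambda_i)^{\beta^{(k)}_i}\bigr)^{M_{jk}}$, a Laurent monomial in the master functions for $\calB$. So the equations themselves change, but because $M$ is invertible over $\Z$, a point of the hyperplane complement satisfies the system for $\calB'$ if and only if it satisfies the system for $\calB$; hence $S_G$ is unchanged. Since $\calB$ does not appear in the forms $L_i(y)+\lambda_i$, the hyperplane arrangement is unchanged as well.

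For the change of $L$ and $\lambda$, the idea is that the pair $(L,\lambda)$ is exactly the data of the affine isomorphism $\psi\colon\R^\ell\to H$ of~\eqref{Eq:psi}, that by construction $S_G=\psi^{-1}(\varphi_\calA(S_F))$, and that the hyperplane arrangement is the preimage under $\psi$ of the union of coordinate hyperplanes $z_i=0$ of $\R^{n+\ell}$ (intersected with $H$). Given a second choice $(L',\lambda')$ of the same type, with associated isomorphism $\psi'$, the composite $\tau:=(\psi')^{-1}\circ\psi$ is an affine-linear automorphism of $\R^\ell$: its linear part is $(L')^{-1}L$, coming from the change of basis $L\mapsto L'$, and its translation part is $(L')^{-1}(\lambda-\lambda')$, which makes sense because $\lambda-\lambda'$ lies in the common linear part of $H$. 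Then $S_{G'}=(\psi')^{-1}(\varphi_\calA(S_F))=\tau(S_G)$, and likewise the new arrangement is $\tau$ applied to the old one, so the effect is precisely an affine-linear change of coordinates on $\R^\ell$, as claimed.

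I expect the only point requiring care is the bookkeeping: checking that $L'$ still parametrizes the same affine space $H$ so that $\psi'$, and hence $\tau$, are genuine isomorphisms, and that $\lambda-\lambda'$ lies in the direction space of $H$ so that the translation part of $\tau$ is well defined. The scheme structure, not merely the underlying point sets, is transported because $\psi$, $\psi'$, and $\tau$ are isomorphisms of affine spaces, so no separate argument is needed there. There is no genuine obstacle; the corollary is a formal unwinding of the construction recalled just before Proposition~\ref{P:Gale}.
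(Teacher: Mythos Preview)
Your proposal is correct and follows exactly the approach the paper indicates: the paper does not give a separate proof of Corollary~\ref{C:basicChange}, but simply remarks that the effect of the choices of $\calB$, $L$, and $\lambda$ may be deduced from Proposition~\ref{P:Gale}, which is precisely what you do in detail. Your unwinding of the $\mathrm{GL}_\ell(\Z)$-relation between two bases of the integer nullspace and of the affine automorphism $\tau=(\psi')^{-1}\circ\psi$ is the intended argument.
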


\begin{remark}
 There are two other forms for the Gale system.
 First, we may take logarithms of the equations~\eqref{Eq:PGD} to obtain the logarithmic form,
 \begin{equation}\label{Eq:log-form}
   \defcolor{\phi_j(y)}\ :=\ 
   \sum_{i=1}^{n{+}\ell} \log(L_i(y)+\lambda_i)\ =\ 0
   \qquad j=1,\dotsc,\ell\,.
 \end{equation}
 This only makes sense in the \demph{positive chamber} of the hyperplane
 complement, 
\[
   \defcolor{\Delta}\ :=\ 
    \{ y\in\R^\ell\mid L_i(y)>-\lambda_i\mbox{\ for all }i=1,\dotsc,n{+}\ell\}\,.
\]
 To obtain the other form, set $a^+:=\max\{a,0\}$ and $a^-:=\max\{-a,0\}$, for $a\in\Z$.
 Note that $a=a^+-a^-$.
 With this notation, we may clear the denominators of~\eqref{Eq:PGD} to get a 
 polynomial form of the Gale dual system,
 \begin{equation}\label{Eq:poly-form}
   \prod_{i=1}^{n{+}\ell} (L_i(y)+\lambda_i)^{(\beta^{(j)}_i)^+}\ -\ 
   \prod_{i=1}^{n{+}\ell} (L_i(y)+\lambda_i)^{(\beta^{(j)}_i)^-}\ =\ 0
   \qquad j=1,\dotsc,\ell\,.
 \end{equation}

\end{remark}

\begin{remark}\label{R:Bounded}
It is no loss of generality to assume that this positive chamber $\Delta$ is
bounded. 
Indeed, suppose that $\Delta$ is unbounded.
This occurs if and only if the coefficient matrix $C$ annihilates a positive vector.
Since $L_1,\dotsc,L_{n+\ell}$ span the dual space to $\R^\ell$, $\Delta$ is strictly convex and
therefore has a bounded face, $P$.
Then there is an affine form, $a_0 + L(y)$ where~$L$ 
is a linear function on $\R^\ell$ and $a_0>0$,
that is strictly positive on $\Delta$ and such that $P$ is
the set of points of $\Delta$ where this polynomial achieves its minimum value on $\Delta$.
Dividing by $a_0$, we may assume that the constant term is 1.

Consider the projective coordinate change $y\mapsto \oy$, where 
 \begin{equation}\label{Eq:PCC}
   \defcolor{\oy_j}\ :=\ y_j\cdot\frac{1}{1+L(y)}\qquad\mbox{for }j=1,\dotsc,\ell\,.
 \end{equation}
We have 
\[
   y_j\ =\ \oy_j\cdot\frac{1}{1-L(\oy)}\qquad\mbox{for }j=1,\dotsc,\ell\,.
\]
Note that $(1+L(y))(1-L(\oy))=1$.
  
The coordinate change~\eqref{Eq:PCC} manifests itself on affine forms as follows. 
If $p(y)$ is a affine form, then
 \begin{equation}
   p(y)\ =\ \op(\oy)\cdot\frac{1}{1-L(\oy)}
   \qquad\mbox{and}\qquad
   \op(\oy)\ =\ p(y)\cdot\frac{1}{1+L(y)}\,,
 \end{equation}
where $\defcolor{\op(\oy)}:=p(\oy)-p(0)L(\oy)$.
If $\Lambda_i(y):=L_i(y)+\lambda_i$ is one of the affine forms defining~$\Delta$, then 
$\overline{\Lambda_i}(\oy)=L_i(\oy)+ \lambda_i(1-L(\oy))$.
Under the projective transformation~\eqref{Eq:PCC}, the polyhedron~$\Delta$ is transformed into
$\oD$, where
\[
   \defcolor{\oD}\ :=\ \{\oy\in\R^\ell\mid 
    1-L(\oy)>0\ \mbox{ and }\ \overline{\Lambda_i}(\oy)>0\quad\mbox{for }i=1,\dotsc,n{+}\ell\}\,.
\]
Set $\defcolor{\overline{\Lambda_0}(\oy)}:=1-L(\oy)$ and
$\defcolor{\beta_{0}^{(j)}}:=-\sum_{i=1}^{\ell+n}\beta_{i}^{(j)}$.

\begin{proposition}[\cite{BS10}, Prop.~2.3]
\label{Prop:proj}
  Under the coordinate change~\eqref{Eq:PCC}, the system of master functions~\eqref{Eq:PGD} on
  $\Delta$ is transformed into the system
\[
   \prod_{i=0}^{\ell+n}  \overline{\Lambda_i}(\oy)^{\beta^{(j)}_i}\ =\ 1
    \qquad\mbox{for}\quad j=1,\dotsc,\ell\,,
\]
 on the bounded polyhedron $\oD$.
\end{proposition}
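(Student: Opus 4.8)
The plan is to prove the proposition by a direct substitution, using only the identities for affine forms already recorded in Remark~\ref{R:Bounded}. Before manipulating the master functions I would first check that~\eqref{Eq:PCC} restricts to a bijection of $\Delta$ onto $\oD$ with inverse $\oy\mapsto y$, $y_j=\oy_j/(1-L(\oy))$. This uses that $1+L(y)>0$ for $y\in\Delta$ --- ensured by the choice of $L$ and the normalization of its constant term in Remark~\ref{R:Bounded} --- together with the identity $1-L(\oy)=1/(1+L(y))$ coming from~\eqref{Eq:PCC}, and its symmetric counterpart $1+L(y)=1/(1-L(\oy))$ for $\oy\in\oD$: thus $y\in\Delta$ yields $\overline{\Lambda_0}(\oy)=1-L(\oy)>0$ and $\overline{\Lambda_i}(\oy)=\Lambda_i(y)/(1+L(y))>0$, so $\oy\in\oD$, and the reverse implication is the same computation run backwards. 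On $\oD$, by its very definition, each of $\overline{\Lambda_0}(\oy),\dotsc,\overline{\Lambda_{n+\ell}}(\oy)$ is strictly positive, so the denominators appearing below never vanish and raising these forms to the integer (possibly negative) exponents $\beta^{(j)}_0,\dotsc,\beta^{(j)}_{n+\ell}$ is legitimate there.

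The core computation is then one identity per equation. For $y\in\Delta$ with image $\oy\in\oD$, the transformation rule $\Lambda_i(y)=\overline{\Lambda_i}(\oy)\cdot\tfrac{1}{1-L(\oy)}$ for $i=1,\dotsc,n{+}\ell$ gives
\[
  \prod_{i=1}^{n+\ell}\bigl(L_i(y)+\lambda_i\bigr)^{\beta^{(j)}_i}
  \;=\;\Bigl(\prod_{i=1}^{n+\ell}\overline{\Lambda_i}(\oy)^{\beta^{(j)}_i}\Bigr)\cdot
       \bigl(1-L(\oy)\bigr)^{-\sum_{i=1}^{n+\ell}\beta^{(j)}_i}
  \;=\;\prod_{i=0}^{\ell+n}\overline{\Lambda_i}(\oy)^{\beta^{(j)}_i}\,,
\]
where the last equality uses $\overline{\Lambda_0}(\oy)=1-L(\oy)$ and the definition $\beta^{(j)}_0=-\sum_{i=1}^{\ell+n}\beta^{(j)}_i$. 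Hence the $j$-th master function takes the value $1$ at $y$ if and only if $\prod_{i=0}^{\ell+n}\overline{\Lambda_i}(\oy)^{\beta^{(j)}_i}=1$ at the corresponding point $\oy$; since $y\mapsto\oy$ is a bijection of $\Delta$ onto $\oD$ with itself and its inverse given by rational maps whose denominators do not vanish on the relevant domains, the Gale system on $\Delta$ and the transformed system on $\oD$ have exactly the same solutions under this correspondence.

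What remains, and the only step that is not bookkeeping, is the claim that $\oD$ is bounded; I would prove this by showing its recession cone is trivial. A nonzero vector $v$ in the recession cone of $\oD$ satisfies $-L(v)\ge0$ and $L_i(v)-\lambda_iL(v)\ge0$ for $i=1,\dotsc,n{+}\ell$, which is to say the ray $\oy_0+tv$, $t\ge0$, lies in $\oD$ for some $\oy_0\in\oD$. If $L(v)=0$, then this ray maps back under~\eqref{Eq:PCC} to a line in direction $v$ lying in $\Delta$, so $v$ is a recession direction of $\Delta$ with $L(v)=0$; but $1+L$ attains its minimum over $\Delta$ on the \emph{bounded} face $P$ (Remark~\ref{R:Bounded}), so $\Delta$ has no such recession direction and $v=0$. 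If $L(v)<0$, the image $y(t)=(\oy_0+tv)/\bigl(1-L(\oy_0+tv)\bigr)$ of the ray stays in $\Delta$ yet converges as $t\to\infty$ to a finite point in the closure of $\Delta$, while $1+L(y(t))=1/\bigl(1-L(\oy_0+tv)\bigr)\to0$, contradicting the fact that $1+L$ is bounded below on the closure of $\Delta$ by a positive constant. Either way $v=0$, so $\oD$ is a bounded polyhedron, and the proposition follows. The genuine work, such as it is, lies in this recession-cone analysis; the transformation of the equations themselves is the elementary identity displayed above.
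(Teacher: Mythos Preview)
The paper does not prove this proposition at all: it is quoted from~\cite{BS10} (Prop.~2.3), and Remark~\ref{R:Bounded} merely sets up the notation and states the result. Your argument is therefore strictly more than what the paper provides, and the core substitution---pulling the common factor $(1-L(\oy))^{-\sum_i\beta^{(j)}_i}$ out and absorbing it as $\overline{\Lambda_0}(\oy)^{\beta^{(j)}_0}$---is correct and is exactly the intended mechanism.

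Your boundedness argument is also sound, but two points deserve a sentence of tightening. In the case $L(v)=0$ you produce a ray in $\Delta$ along which $1+L$ is constant; to reach the contradiction you should say explicitly that translating a point of $P$ by this recession direction keeps you in $\Delta$ with the same (minimal) value of $1+L$, hence in $P$, contradicting that $P$ is bounded. In the case $L(v)<0$ you invoke that $1+L$ is bounded below on $\overline{\Delta}$ by a positive constant; this is true, but it is worth noting that it follows from Remark~\ref{R:Bounded} because $1+L$ is stated to \emph{achieve} its (necessarily positive) minimum on $\Delta$ at the face $P$, so the infimum over the closure is that same positive value. With those two clauses added, the proof is complete.
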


This coordinate transformation $y\mapsto \overline{y}$ is a concrete and
explicit projective transformation sending the unbounded polyhedron $\Delta$ to a
bounded polyhedron $\overline{\Delta}$.
We will henceforth assume that $\Delta$ is bounded.
\end{remark}

%
\subsection{Khovanskii-Rolle continuation}\label{SS:KhRo}
%
Khovanskii-Rolle continuation is a numerical algorithm introduced in~\cite{BS10} to compute the
solutions to a Gale system in the positive chamber.
It is based on the Khovanskii-Rolle Theorem and the proof of the fewnomial bound~\cite{BS} 
which bounds the number of solutions to a Gale system.

We first state the Khovanskii-Rolle Theorem.
Write \defcolor{$\calV_\Delta(f_1,\dotsc,f_k)$} for the common zeroes in $\Delta$ of 
functions $f_1,\dotsc,f_k$ defined on $\Delta$, counted with multiplicity and 
\defcolor{$J(f_1,\dotsc,f_\ell)$} for the Jacobian (determinant), $\det(\partial f_i/\partial y_j)$, of
$f_1,\dotsc,f_\ell$. 
For a curve $\gamma$ in $\Delta$, let \defcolor{$\ubc_\Delta(\gamma)$} be the number of unbounded
components of $\gamma$ in $\Delta$ (those that meet its boundary).
This is one-half the number of points of intersection of $\gamma$ with the boundary of $\Delta$.

\begin{theorem}[Khovanskii-Rolle~\cite{Kh91}]\label{T:Kh-Ro}
  Let $g_1,\dotsc,g_\ell$ be smooth functions defined on a polyhedral domain
  $\Delta\subset\R^\ell$ 
  with finitely many common zeroes and suppose
  $\defcolor{\gamma}:=\calV_\Delta(g_1,\dotsc,g_{\ell-1})$ is a smooth curve. 
  Let $\defcolor{J}$ be the Jacobian  of $g_1,\dotsc,g_\ell$.
  Then 
 \begin{equation}\label{Eq:KhRo}
   |\calV_\Delta (g_1,\dotsc,g_\ell)|\ \leq\ \ubc_\Delta(\gamma)\ +\
   |\calV_\Delta(g_1,\dotsc,g_{\ell-1},\,J)| \,.    
 \end{equation}
\end{theorem}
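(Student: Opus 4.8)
The plan is to establish the Khovanskii--Rolle inequality~\eqref{Eq:KhRo} by a Rolle-type argument applied to the curve $\gamma = \calV_\Delta(g_1,\dotsc,g_{\ell-1})$, viewing $g_\ell$ restricted to $\gamma$ as a smooth function whose zeroes we must bound. The key observation is that a zero of $g_\ell$ on a connected arc of $\gamma$ is, by the intermediate value theorem, either an endpoint of that arc (hence on the boundary of $\Delta$), or it is flanked by a critical point of $g_\ell|_\gamma$. So the strategy is: (i) parametrize $\gamma$ locally and compute $\frac{d}{dt}\bigl(g_\ell|_\gamma\bigr)$; (ii) show that this derivative vanishes exactly when the Jacobian $J = J(g_1,\dotsc,g_\ell)$ vanishes on $\gamma$; (iii) do the Rolle bookkeeping on each connected component of $\gamma$ in $\Delta$, separating bounded loops from arcs meeting $\partial\Delta$.

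\medskip

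First I would make precise the computation in step (ii). Along $\gamma$ the functions $g_1,\dotsc,g_{\ell-1}$ are constantly zero, so a tangent vector $\dot{y}$ to $\gamma$ satisfies $\nabla g_i \cdot \dot y = 0$ for $i=1,\dotsc,\ell-1$. Up to sign and a nonzero scalar, such a tangent vector is given by the vector of signed $(\ell-1)\times(\ell-1)$ minors of the Jacobian matrix of $g_1,\dotsc,g_{\ell-1}$ (i.e.\ the generalized cross product of the gradients $\nabla g_1,\dotsc,\nabla g_{\ell-1}$); smoothness of $\gamma$ guarantees this vector is nonvanishing on $\gamma$. Then
\[
  \frac{d}{dt}\,g_\ell\bigl|_\gamma \ =\ \nabla g_\ell \cdot \dot y\,,
\]
and expanding the dot product with the cross-product formula for $\dot y$ gives, by the Laplace expansion of a determinant, precisely $\pm J(g_1,\dotsc,g_\ell)$. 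Hence the critical points of $g_\ell$ on $\gamma$ are exactly the points of $\gamma$ where $J$ vanishes, i.e.\ the finite set $\calV_\Delta(g_1,\dotsc,g_{\ell-1},J)$. (One should note the hypothesis that there are only finitely many common zeroes of $g_1,\dotsc,g_\ell$ ensures $\gamma$ is not contained in $\{g_\ell = 0\}$, and a genericity/Sard-type remark handles the case where $J$ might vanish identically on a component — or one simply perturbs; I would keep this to a sentence.)

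\medskip

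Next comes the Rolle bookkeeping, step (iii). Decompose $\gamma \cap \Delta$ into its connected components. A component is either a closed loop contained in $\Delta$ (a ``bounded'' component in the terminology of the statement), or an arc whose closure meets $\partial\Delta$ in its two endpoints (an ``unbounded'' component, counted by $\ubc_\Delta(\gamma)$); note $\Delta$ itself may be taken bounded by Remark~\ref{R:Bounded}, so there are no components running off to infinity inside $\Delta$. On a closed loop, $g_\ell|_\gamma$ is a smooth periodic function, so between any two consecutive zeroes there is a critical point, and the number of zeroes does not exceed the number of critical points on that loop. On an open arc with both endpoints on $\partial\Delta$, between consecutive zeroes in the interior there is again a critical point, so the number of interior zeroes is at most (number of critical points on the arc) $+\,1$; the ``$+1$'' is charged to that arc, and summing over all such arcs contributes $\ubc_\Delta(\gamma)$. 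Adding the contributions over all components, the total number of zeroes of $g_\ell$ on $\gamma\cap\Delta$ — which is exactly $|\calV_\Delta(g_1,\dotsc,g_\ell)|$ — is at most $\ubc_\Delta(\gamma)$ plus the total number of critical points of $g_\ell|_\gamma$, and the latter equals $|\calV_\Delta(g_1,\dotsc,g_{\ell-1},J)|$ by step (ii). This is~\eqref{Eq:KhRo}.

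\medskip

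The main obstacle, I expect, is not any single computation but the care needed to make the Rolle argument rigorous with multiplicities and degenerate configurations: multiple zeroes of $g_\ell|_\gamma$ must be counted correctly (a zero of multiplicity $m$ forces $m-1$ critical points nearby, with multiplicity, by repeated application of the mean value theorem), zeroes coinciding with critical points or with $\partial\Delta$ must be accounted for consistently, and the identification $\frac{d}{dt}g_\ell|_\gamma = \pm J$ must respect multiplicities as well. A clean way to sidestep the worst of this is to invoke a small perturbation of $(g_1,\dotsc,g_\ell)$ making all relevant zero sets transverse and all the relevant functions Morse on $\gamma$, prove the inequality there by the elementary Rolle count above, and then pass to the limit using semicontinuity of $\calV_\Delta(\cdot)$ and of $\ubc_\Delta(\cdot)$ — though since these are classical facts (the theorem is due to Khovanskii), I would likely just cite~\cite{Kh91} and present the perturbation-free version as a sketch.
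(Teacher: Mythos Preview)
Your proposal is correct and matches the paper's approach exactly: the paper does not give a full proof of this theorem (it cites~\cite{Kh91}) but only states the key idea in one sentence---``along an arc of the curve $\gamma$, the Jacobian vanishes at least once between any two consecutive zeroes of $g_\ell$''---which is precisely the Rolle argument you have fleshed out, including the identification of critical points of $g_\ell|_\gamma$ with zeroes of $J$ and the bounded/unbounded component bookkeeping.
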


The main idea is that along an arc of the curve $\gamma$, the Jacobian vanishes at least once
between any two consecutive zeroes of $g_\ell$.
 \[
  \begin{picture}(262,130)(-20,0)
   \put(0,0){\includegraphics[height=120pt]{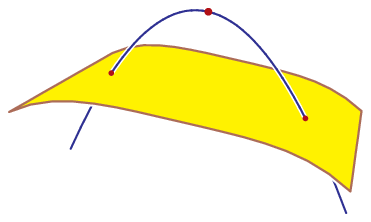}}
    \put(75,3){$\gamma=\calV_\Delta(g_1,\dotsc,g_{\ell-1})$}
    \put(-20,65){$g_\ell=0$}
    \put(148,120){$\calV_\Delta(g_1,\dotsc,g_{\ell-1},J)$}
    \put(146,122){\vector(-3,-1){30}}
  \end{picture}
 \]
If we let $S$ be the points of $\gamma$ where $J=0$ and $T$ be the points where $\gamma$ meets
the boundary of $\Delta$, then every zero of $g_\ell$ on $\gamma$ lies in a unique interval of
$\gamma\smallsetminus(S\cup T)$.
Thus, the Khovanskii-Rolle Theorem leads to the following continuation algorithm.

\begin{alg}\label{Alg:KRC_small}
 Using arclength continuation to follow $\gamma$ starting at points of $S\cup T$,
 where~$\gamma$ is traced in both directions from points of $S$ and is traced into the
 interior of the polyhedron~$\Delta$ from points of $T$, will recover all points of $\gamma$ where
 $g_\ell=0$, each at least twice.
\end{alg}

We apply this to a Gale system.
Let $\phi_1,\dotsc,\phi_\ell$ be the functions in the logarithmic form of a Gale
system~\eqref{Eq:log-form}.
Set $\gamma_\ell:=\calV(\phi_1,\dotsc,\phi_{\ell-1})$.
Then $\gamma_\ell$ is an algebraic curve in $\Delta$, as logarithmic functions $\phi_i$
define the same sets as the polynomial functions~\eqref{Eq:poly-form}, and these form a complete intersection.
Let $J_\ell$ be the Jacobian determinant of $\phi_1, \dotsc, \phi_\ell$.
Given the points $\defcolor{S_{\ell-1}}:=\calV(\phi_1,\dotsc,\phi_{\ell-1},J_\ell)$ 
where $J_\ell$ vanishes on $\gamma_\ell$ and the points $\defcolor{T_\ell}$ where $\gamma_\ell$
meets the boundary of $\Delta$, we may compute the solutions
$\defcolor{S_\ell}:=\calV_\Delta(\phi_1,\dotsc,\phi_\ell)$ using arclength
continuation along $\gamma_\ell$, by Algorithm~\ref{Alg:KRC_small}.

A genericity assumption on the coefficient matrix $C$ and the exponents $\calB$ implies that~$T_\ell$ is a subset
of the vertices of $\Delta$. 
By Lemma~3.4 (1) of~\cite{BS}, $\defcolor{\widetilde{J}_\ell}:=J_\ell\cdot\prod_i(L_i(y)+\lambda_i)$ is a
polynomial of degree $n$. 
Thus we may compute the solutions $S_\ell$ to the
Gale system $G$, given knowledge of the set $T_\ell$ and the set $S_{\ell-1}$ in which the
transcendental function $\phi_\ell$ defining $S_\ell$ is replaced by the low degree polynomial
$\widetilde{J}_\ell$. 
The Khovanskii-Rolle algorithm~~\cite{BS10} and fewnomial bound~\cite{BS} iterate this procedure.
Before explaining them, we discuss our genericity assumptions.

The exponent vectors $\calB=\{\beta^{(1)},\dotsc,\beta^{(\ell)}\}$ are the columns of a
$(n{+}\ell{+}1)\times\ell$-matrix, also written $\calB$, whose rows correspond to the affine functions defining
$\Delta$. 
For a face $P$ of $\Delta$ and any $j=1,\dotsc,\ell$, let \defcolor{$\calB_{P,j}$} be the submatrix of $\calB$
consisting of the first $j$ columns of the rows of $\calB$ corresponding to the affine forms vanishing on $P$.
We assumed $\Delta$ is bounded, which may be achieved by a projective change of coordinates, as
explained in Remark~\ref{R:Bounded}.

\begin{assumption}\label{A:one}
  We will make the following assumptions.
 \begin{enumerate}
  \item  The affine forms $L_i(y)+\lambda_i$ for $i=1,\dotsc,n{+}\ell{+}1$ are in general position in that 
          exactly $\ell$ of them vanish at each vertex of $\Delta$.
  \item  For every face $P$ of $\Delta$, the square matrix $\calB_{P,\codim(P)}$ annihilates no nonzero nonnegative
    row vector. 
  \item The sets $\gamma_j$ defined below are curves.
 \end{enumerate}
\end{assumption}

Assumption (1) is implied by the assumption that the coefficients of
the system $F(x)=b$ are general, which is mild, as we already assumed that the system had 
only isolated zeroes.
A consequence  is that $\Delta$ is a simple polyhedron.
That is, any face $P$ of codimension $r$ lies on exactly $r$ facets, and thus exactly $r$ affine forms
$L_i(y)+\lambda_i$ vanish on $P$. 

Assumptions (2) and (3) depend upon the exponent vectors $\calA$ of the monomials in $F$, as well
as the polyhedron $\Delta$ (which comes from $C$ and $b$), and therefore it is more
difficult for them to hold.
As explained in~\cite{BS}, these assumptions hold on a Zariski open subset of the spaces of matrices $C$ and
$\calB$ (when $\calB$ is allowed to have real entries).

We sketch the steps of Khovanskii-Rolle continuation under these assumptions.
For each $j=\ell,\ell{-}1,\dotsc,1$, set
$\defcolor{J_j}$ to be the Jacobian of $\phi_1.\dotsc,\phi_j,J_{j+1},\dotsc,J_\ell$
and define $\defcolor{\gamma_j}$ to be
$\calV_\Delta(\phi_1,\dotsc,\phi_{j-1}\,,\,J_{j{+}1},\dotsc,J_\ell)$.
By Assumption~\ref{A:one}(3), this is a curve.
Let $\defcolor{T_j}$ be the points where $\gamma_j$ meets $\partial\Delta$. 
Finally, for $j=0,\dotsc,\ell$ set 
$\defcolor{S_j}:=\calV_\Delta(\phi_1,\dotsc,\phi_j,J_{j+1},\dotsc,J_\ell)$.
Then $S_\ell$ is the set of solutions to the Gale system.

By Algorithm~\ref{Alg:KRC_small}, arclength continuation along $\gamma_j$ beginning from points
of $T_j$ and $S_{j-1}$ will compute all points of $S_j$.
Thus, the solutions $S_\ell$ of the Gale system in $\Delta$ may be computed iteratively
from $S_0$ and the sets $T_1,\dotsc,T_\ell$.
Computing these sets is feasible, for by Lemma~3.4 of~\cite{BS} and 
Corollary~\ref{C:faceCharacterization}, we have
 \begin{enumerate}
  \item $\widetilde{J_i}=J_i\cdot\Bigl(\prod_i(L_i(y)+\lambda_i)\Bigr)^{2^{\ell-j}}$ is a
      polynomial of degree $2^{\ell-j}\cdot n$. 

  \item The points of $T_j$ all lie on some faces of $\Delta$ of dimension $\ell{-}j$ and are the
    points of those faces where the polynomials 
    $\widetilde{J_j},\dotsc,\widetilde{J_\ell}$ vanish.
 \end{enumerate}

For $j = 2, \dotsc, \ell$, let $\defcolor{\mu_j} := \calV_\Delta(\phi_1, \dotsc, \phi_{j-1})$.  
The faces of $\Delta$ in (2) lie in the closure of the set $\mu_j$
and are determined by $\calB$ (Corollary~\ref{C:faceCharacterization} gives the precise statement).
This set $\mu_j$ is algebraic, as the transcendental functions $\phi_i$ define the same set in $\Delta$
as do the polynomials~\eqref{Eq:poly-form}. 
Furthermore, $\mu_j$ has dimension $\ell{-}j{+}1$.
Indeed, the rational function versions~\eqref{Eq:PGD} of the $\phi_i(y)$ form a regular sequence in the hyperplane
complement, so their common zero set $\mu_j$ has expected dimension $\ell{-}(j{-}1)$.
The assumption that $\gamma_j$ is a curve is that the Jacobians $J_{j{+}1},\dotsc,J_\ell$ form a complete
intersection on $\mu_j$.

%
\subsection{Our running example}\label{S:running_example}
Let us consider this for the example of Subsection~\ref{SS:Gale_algebra}.
The first two assumptions hold.
All polygons are simple, and no entry and no minor vanishes in the matrix of
exponents,
\[
   \calB\ =\ \left[\begin{array}{rrrrr}
      -1 & -3 &  1 & -1 & 3\\ 
       2 & -1 &  3 & -2 & -2 
     \end{array}\right]^T\,.
\]
The Jacobian of the logarithmic system~\eqref{Eq:Logs1} is the rational function,
\[
  J_2\ :=\   \frac{2x^3-16x^2y+12xy^2+6y^3 
      -\tfrac{31}{2}x^2+26xy-\tfrac{53}{2}y^2+\tfrac{9}{2}x+\tfrac{15}{2}y-2}%
    {xy(1-x-y)(\tfrac{1}{2}{-}x{+}y)(1+x-3y)}\ ,
\]
whose denominator is the product of the linear factors defining the lines in
Figure~\ref{F:Gale_pic1}.
Clearing the denominator and multiplying by 2 gives the cubic polynomial,
\[
   \rgbColor{0.8 0 0}{\widetilde{J}_2}\ :=\
   \rgbColor{0.8 0 0}{4x^3-32x^2y+24y^2x+12y^3-31x^2+52xy-53y^2+9x+15y-4}\,.
\]
The Jacobian $J_1$ of $g$ and $J_2$ has denominator the product of the squares of the forms
defining the lines, and its numerator is the sextic
 \begin{multline*}
  -56x^6+464x^5y-456x^4y^2-1792x^3y^3+1896x^2y^4+336xy^5-72y^6+640x^5-1960x^4y\\
   +456x^3y^2+3096x^2y^3-5176xy^4+480y^5-482x^4+2248x^3y-2416x^2y^2+4256xy^3+250y^4\\
   +61x^3-401x^2y-1769xy^2-491y^3-10x^2+664xy+278y^2-81x-101y+16\,.
 \end{multline*}
The system $J_1=J_2=0$ has $18$ solutions with $10$ real, but it has only two solutions~$S_0$
in the pentagon.
The curve $\rgbColor{0.9 0 0}{\gamma_1}$ is defined by the vanishing of the
Jacobian $\rgbColor{0.9 0 0}{J_2}$.
The set $T_1$ of points where it meets the boundary of the pentagon
consists of two points.

In Figure~\ref{F:KRC}, we illustrate the configuration of the sets $S_0,S_1,S_2,T_1$, and
$T_2$ in the pentagon of Figure~\ref{F:Gale_pic1}, along with the curves defined by $J_1$,
$J_2$ (written $\gamma_1$), $g$ ($\gamma_2$), and $f$.
\begin{figure}[htb]
  \begin{picture}(130,95)
   \put(0,0){\includegraphics[height=95pt]{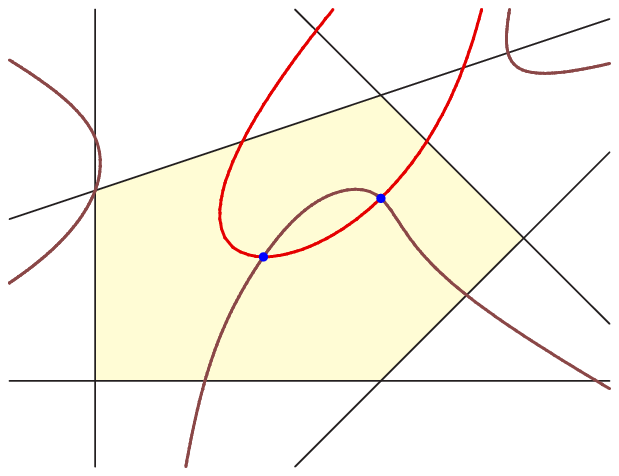}}
   \put(43,83){$\rgbColor{0.9 0 0}{\gamma_1}$}
   \put(2,30){$\rgbColor{0.54 0.28 0.28}{J_1}$}
   \put(25,3){$\rgbColor{0.54 0.28 0.28}{J_1}$}
   \put(112,71){$\rgbColor{0.54 0.28 0.28}{J_1}$}
    \put(65,23){$\Blue{S_0}$}
    \put(72.5,34){\vector(1,4){4.8}}
    \put(68,34){\vector(-3,2){12.5}}
  \end{picture}\qquad
  \begin{picture}(130,95)
   \put(0,0){\includegraphics[height=95pt]{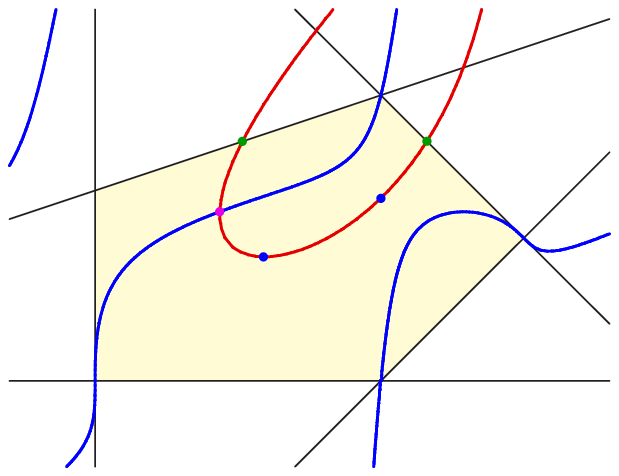}}
   \put(94,73){$\rgbColor{0.9 0 0}{\gamma_1}$}
   \put(9,85){$\Blue{g}$}   \put(6,3){$\Blue{g}$}
   \put(82,90){$\Blue{g}$}   \put(79,3){$\Blue{g}$}

    \put(65,23){$\Blue{S_0}$}
    \put(72.5,34){\vector(1,4){4.8}}
    \put(68,34){\vector(-3,2){12.5}}

    \put(34.5,23){$S_1$}  \put(39.5,34){\vector(1,4){4.2}}

    \put(23,81){$T_1$}
    \put(35.5,85.4){\vector(3,-1){49}}
    \put(35.5,81.8){\vector(1,-1){12}}
  \end{picture}\qquad
  \begin{picture}(130,95)
   \put(0,0){\includegraphics[height=95pt]{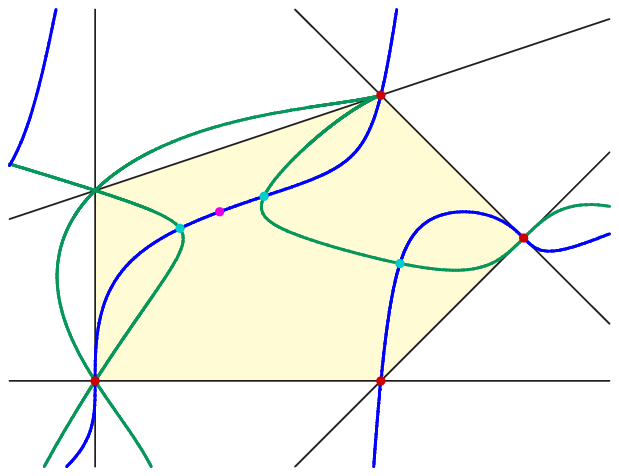}}

    \put(2,38){$\ForestGreen{f}$} \put(58,80){$\ForestGreen{f}$}
    \put(126,56){$\ForestGreen{f}$} 

    \put(9,85){$\Blue{g}$}    \put(125,44){$\Blue{g}$}
    \put(82,90){$\Blue{g}$}   \put(79,3){$\Blue{g}$}
    \put(43,23){$S_2$}
     \put(44,34){\vector(-1,2){6}}
     \put(48,34){\vector(1,4){4.5}}
     \put(52,32.5){\vector(3,1){26}}

    \put(32.5,82){$S_1$}  \put(38.2,79){\vector(1,-4){5.5}}

    \put(44,7){$T_2$}
    \put(41.5,12){\vector(-4,1){20}}  \put(54.5,12){\vector(4,1){20}} 

    \put(104,73){$T_2$}
    \put(107.7,70){\vector(0,-1){19.5}}  \put(102,77.8){\vector(-1,0){20}} 

  \end{picture}

\caption{Solving the Gale system with Khovanskii-Rolle continuation}
\label{F:KRC}
\end{figure}
The middle picture shows that arclength continuation along $\gamma_1$ in both directions
from each point of~$S_0$ and into the pentagon from each point of $T_1$ will find the point $S_1$
twice (the other four continuations terminate when they either reach a point of $S_0$
or exit the pentagon).
The picture on the right shows that arclength continuation along $\gamma_2$ in both directions
from the point of $S_1$ and into the pentagon from the four points of $T_2$ will reach the 
three points of~$S_2$, twice each.
These points are
 \begin{equation}\label{Eq:Gale_Sols}
  (0.29557, 0.32316)\,,\ 
  (0.14846, 0.26681)\,,\ 
  (0.53346, 0.20563)\,.
 \end{equation}
%

\begin{remark}\label{R:singular}
 In Figure~\ref{F:KRC}, we see that some of the points  $T_2$ are singular points of
 $f=0$, but not of $g=0$.
 In general the points $T_1$ are smooth points on the curve $\gamma_1$, but the other points
 $T_j$ with $j>1$ may be singular points of the curves $\gamma_j$ they lie upon.
 In Section~\ref{S:bound}, we will discuss when it is possible to eliminate 
 tracking from these possibly singular points.
 A future publication~\cite{BHNS} detailing methods (and software) will explore strategies for arclength  
 continuation along $\gamma_j$ from these singular points of the boundary.
 \end{remark}

\begin{remark}
 Having computed approximations to the points~\eqref{Eq:Gale_Sols}, we do not yet have
 solutions to the original polynomial system.
 By Gale duality, if \defcolor{$S_G$} is the set of solutions to the Gale system, then 
 $\varphi_\calA^{-1}(\psi(S_G))$ is the set of solutions to the original polynomial system.  
 Suppose $S_G^*$ is a set of approximations to solutions of the Gale system.
 The difficulty is that points of $\psi(S_G^*)$ likely do not lie in $\varphi_\calA((\R^\times)^n)$, and so 
 $\varphi_\calA^{-1}(\psi(S_G^*))$ is undefined.
 In Subsection~\ref{SS:Algo_sols}, we give a method to overcome this obstruction.
\end{remark}

%
\section{A Descartes' bound for fewnomials}\label{S:bound}
%

We give a refinement to the upper bound of $\frac{e^2+3}{4}2^{\binom{\ell}{2}}n^\ell$ for the
number of solutions to a Gale system that was discovered while developing our software.
This refinement involves sign conditions and gives a Descartes'-like bound for fewnomials.
We then discuss how this refinement affects the running of the Khovanskii-Rolle algorithm.

\begin{theorem}\label{T:newBound}
 Suppose that $\Phi(x)=0$ is a system of $n$ Laurent polynomials in $n$ variables involving
 $n{+}\ell{+}1$ monomials.
 If the exponent vector of one monomial lies in the relative interior of the convex hull of the exponent
 vectors of the other monomials and if the coefficient matrix of these other monomials does not have
 a positive vector in its kernel, then this system has at most
\[
   \Bigl(1+ 2^{-\ell}\bigl(1+\tfrac{\ell}{n}\bigr)\Bigr)
    2^{\binom{\ell}{2}}n^\ell
\]
 nondegenerate positive solutions.
\end{theorem}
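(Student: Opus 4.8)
The plan is to pass to a Gale dual of $\Phi(x)=0$ and run the Khovanskii--Rolle recursion of~\cite{BS} recalled in Section~\ref{SS:KhRo}, using the two hypotheses to annihilate all but one of the boundary terms in that recursion. First I would normalize: let $a_0$ be the exponent lying in the relative interior of the convex hull of the remaining exponents $a_1,\dots,a_{n+\ell}$ and divide $\Phi$ by the monomial $x^{a_0}$. This yields an equivalent Laurent system $F(x)=b$ whose $n{+}\ell$ nonconstant monomials are $x^{a_i-a_0}$ ($1\le i\le n{+}\ell$), whose exponent matrix is $\calA=[\,a_1{-}a_0\ \cdots\ a_{n+\ell}{-}a_0\,]$ and whose coefficient matrix $C$ is exactly the coefficient matrix of the remaining monomials of $\Phi$. (We may assume, as otherwise the bound is vacuous, that $\Z\calA$ has full rank $n$; the remaining nondegeneracy hypotheses on the coefficients are part of the genericity reduction below.) Writing $a_0=\sum_{i\ge1}t_i a_i$ as a convex combination with all $t_i>0$ gives $\sum_{i\ge1}t_i(a_i{-}a_0)=0$, so the integer nullspace of $\calA$ contains a positive vector; by Corollary~\ref{C:basicChange} we may take the nullspace basis $\calB=\{\beta^{(1)},\dots,\beta^{(\ell)}\}$ with $\beta^{(1)}>0$. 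Since $C$ annihilates no positive vector, Remark~\ref{R:Bounded} shows the positive chamber $\Delta\subset\R^\ell$ is \emph{bounded}, so we may work directly with the $n{+}\ell$ affine forms $\Lambda_i:=L_i(y)+\lambda_i$, with no projective compactification. By Proposition~\ref{P:Gale} the nondegenerate positive solutions of $\Phi$ are in bijection with the nondegenerate solutions in $\Delta$ of the Gale system $\phi_1=\dots=\phi_\ell=0$, where $\phi_j=\sum_i\beta^{(j)}_i\log\Lambda_i$.

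The decisive point is that $\beta^{(1)}>0$: as $y\to\partial\Delta$ some $\Lambda_i\to0^+$ while the others stay positive and bounded, so $\phi_1(y)\to-\infty$, whence $\{\,y\in\Delta:\phi_1(y)=0\,\}$ is a compact subset of the open polytope $\Delta$. Now form the curves $\gamma_j=\calV_\Delta(\phi_1,\dots,\phi_{j-1},J_{j+1},\dots,J_\ell)$ and the sets $S_j=\calV_\Delta(\phi_1,\dots,\phi_j,J_{j+1},\dots,J_\ell)$ of Section~\ref{SS:KhRo}; iterating Theorem~\ref{T:Kh-Ro} gives
\[
   |S_\ell|\ \le\ \sum_{j=1}^{\ell}\ubc_\Delta(\gamma_j)\ +\ |S_0|,\qquad S_0=\calV_\Delta(J_1,\dots,J_\ell).
\]
For every $j\ge2$ the function $\phi_1$ is among the defining equations of $\gamma_j$, so $\gamma_j\subset\{\phi_1=0\}$; were a component of $\gamma_j$ to meet $\partial\Delta$ there would be points of $\gamma_j$ on which $\phi_1=0$ while $\phi_1\to-\infty$, which is absurd. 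Hence $\ubc_\Delta(\gamma_j)=0$ for all $j\ge2$, and only $\ubc_\Delta(\gamma_1)$ and $|S_0|$ survive --- this is the gain over the general bound of~\cite{BS}.

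These two surviving terms I would bound by degree. By Lemma~3.4 of~\cite{BS}, $\widetilde{J_j}:=J_j\cdot\bigl(\prod_i\Lambda_i\bigr)^{2^{\ell-j}}$ is a polynomial of degree $2^{\ell-j}n$ whose zero locus agrees with that of $J_j$ on $\Delta$. Thus the curve $\gamma_1=\calV_\Delta(J_2,\dots,J_\ell)$ has Zariski closure of degree at most $\prod_{j=2}^{\ell}2^{\ell-j}n=2^{\binom{\ell-1}{2}}n^{\ell-1}$, and no component of it lies in any of the $n{+}\ell$ hyperplanes $\{\Lambda_i=0\}$ that cover $\partial\Delta$ (these hyperplanes miss $\Delta$). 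Bounding the intersection of that curve with each such hyperplane by B\'ezout and summing gives $|\gamma_1\cap\partial\Delta|\le(n{+}\ell)\,2^{\binom{\ell-1}{2}}n^{\ell-1}$, so, using $\binom{\ell-1}{2}-1=\binom{\ell}{2}-\ell$,
\[
   \ubc_\Delta(\gamma_1)\ \le\ \tfrac12(n{+}\ell)\,2^{\binom{\ell-1}{2}}n^{\ell-1}\ =\ 2^{-\ell}\bigl(1+\tfrac{\ell}{n}\bigr)2^{\binom{\ell}{2}}n^{\ell}.
\]
Likewise B\'ezout on $\gamma_1$ gives $|S_0|\le(\deg\widetilde{J_1})\cdot(\deg\gamma_1)\le 2^{\ell-1}n\cdot 2^{\binom{\ell-1}{2}}n^{\ell-1}=2^{\binom{\ell}{2}}n^{\ell}$. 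Adding the two estimates produces the asserted bound $\bigl(1+2^{-\ell}(1+\tfrac{\ell}{n})\bigr)2^{\binom{\ell}{2}}n^{\ell}$.

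It remains to handle genericity. The Khovanskii--Rolle recursion needs Assumption~\ref{A:one} (the $\gamma_j$ smooth curves, all intersection sets finite, each $T_j$ supported on faces of $\Delta$); as in~\cite{BS} this is arranged by a small perturbation of the coefficients of $\Phi$, which fixes its support --- so the relative-interior hypothesis is untouched --- and, the sign condition on $C$ being open, preserves that hypothesis as well. Each nondegenerate positive solution of $\Phi$ survives such a perturbation, so the bound proved above for the perturbed system bounds the number of nondegenerate positive solutions of $\Phi$. I expect the only genuine work here is this genericity bookkeeping together with checking that the degree and face statements of~\cite{BS} hold verbatim when $\Delta$ is kept bounded by hypothesis rather than by the projective compactification of Remark~\ref{R:Bounded}; the geometric heart --- that a positive nullspace vector drives $\phi_1$ to $-\infty$ on $\partial\Delta$ and thereby kills every boundary term past the first --- is the easy part.
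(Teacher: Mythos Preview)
Your proof is correct and follows the same overall arc as the paper's: normalize by the interior monomial, obtain a positive vector $\beta^{(1)}$ in the integer nullspace of $\calA$, use the sign condition on $C$ to conclude that $\Delta$ is bounded with at most $n{+}\ell$ facets, show that the boundary terms $T_2,\dots,T_\ell$ vanish, and finish with the B\'ezout estimates for $|S_0|$ and $|T_1|$.

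Where you differ is in the argument that $T_j=\emptyset$ for $j\ge2$. The paper routes this through Corollary~\ref{C:faceCharacterization} (itself resting on the analytic coordinate change of Lemma~\ref{L:Why_Assumption}): since the first column of every submatrix $\calB_{P,j-1}$ is strictly positive, no nonzero nonnegative row vector can annihilate it, hence $M_{\ell-j}(\Delta,\calB)=0$. Your argument is more direct and avoids that machinery: the positivity of $\beta^{(1)}$ forces $\phi_1\to-\infty$ on $\partial\Delta$, so $\{\phi_1=0\}$ is compactly contained in the open polytope and every $\gamma_j$ with $j\ge2$, being a subset of this set, misses the boundary outright. This is cleaner for the theorem at hand. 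The paper's route, on the other hand, is what the rest of the paper actually needs---Corollary~\ref{C:faceCharacterization} is precisely the data the Khovanskii--Rolle algorithm uses to locate the points $T_j$ when no positive nullspace vector is available, and it is what yields the sharper estimate~\eqref{Eq:tighter} in general---so the proof of Theorem~\ref{T:newBound} there simply reuses that infrastructure.
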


We will give a proof of Theorem~\ref{T:newBound} as part of our discussion on our improvement to the
fewnomial bound.
We first derive the fewnomial bound originally given in~\cite{BS}.

As explained at the end of Subsection~\ref{SS:KhRo}, the solutions $S_\ell$ to the Gale system are
found by first computing $S_0:=\calV_\Delta(\widetilde{J_1},\dotsc,\widetilde{J_\ell})$ and certain 
sets $T_1,\dotsc,T_\ell$ where $T_j$ consists of the points of the boundary of $\Delta$ that meet
the curve $\gamma_j$.
By Corollary~\ref{C:faceCharacterization} below, $T_j$ consists of the points of
certain faces of $\Delta$ of dimension $\ell{-}j$ where $\widetilde{J_{j+1}},\dotsc,\widetilde{J_\ell}$ vanish.
As $\widetilde{J_i}$ is a polynomial of degree $2^{\ell-i}n$, we may replace faces of $\Delta$ by
their affine span and use B\'ezout's Theorem to estimate the cardinality of these sets,
 \begin{equation}\label{Eq:Numbers}
  \begin{array}{rcl}
   |S_0| &\leq & {\displaystyle \prod_{i=1}^\ell \deg(\widetilde{J_i})\ =\ 2^{\binom{\ell}{2}}n^\ell}\\
   |T_j| &\leq & {\displaystyle f_{\ell-j}(\Delta) \cdot
         \prod_{i=j+1}^\ell \deg(\widetilde{J_i})\ =\ f_{\ell-j}(\Delta) 2^{\binom{\ell-j}{2}}n^{\ell-j}\ ,}
  \end{array}
 \end{equation}
 where $f_i(\Delta)$ is the number of $i$-dimensional faces of $\Delta$.
 In fact, there is a tighter estimate for $|T_j|$ given below~\eqref{Eq:tighter}.

 From the Khovanskii-Rolle Theorem, we have 
 $|S_i|\leq |S_{i-1}|+\frac{1}{2}|T_i|$, and so
 \begin{eqnarray}
  |S_\ell| &\leq& |S_0|\ +\ \tfrac{1}{2}\bigl(|T_1|+\dotsb+|T_\ell|\bigr)\nonumber\\
          &\leq& 2^{\binom{\ell}{2}}n^\ell\ +\ 
           \frac{1}{2}\sum_{j=1}^\ell f_{\ell-j}(\Delta)2^{\binom{\ell-j}{2}}n^{\ell-j} \label{Eq:FBoundNoSlack} \\
          &<&2^{\binom{\ell}{2}}n^\ell\ +\ 
           \frac{1}{2}\sum_{j=1}^\ell
           \binom{n{+}\ell{+}1}{j}2^{\binom{\ell-j}{2}}n^{\ell-j}\nonumber\\
          &\leq& 2^{\binom{\ell}{2}}n^\ell\ +\ 
           \frac{1}{2}\sum_{j=1}^\ell \frac{2^{j-1}}{j!} 2^{\binom{\ell}{2}}n^\ell
          \ \ <\ \ \frac{e^2{+}3}{4}2^{\binom{\ell}{2}}n^\ell\,. \nonumber
 \end{eqnarray}
 The estimate of the last row is Lemma~3.5 in~\cite{BS}.

 Any difference in the two sides of any inequality (its \demph{slack}) propagates through
 to the final bound.
 One clear way that the estimates~\eqref{Eq:Numbers} could have slack would be if a system
 $\widetilde{J_{j+1}},\dotsc,\widetilde{J_\ell}$ on the complex affine span of a face did
 not have all of its solutions lying in that face.
 In practice, we have found that very few of the solutions lie on the desired face of $\Delta$.
 While this seems impossible to predict or control, the Khovanskii-Rolle continuation algorithm 
 always takes advantage of this source of slack.

 Another source of slack is our estimates for $|T_j|$, which depend on the number
 of certain faces of $\Delta$, as not all faces of codimension $j$ can contain points of $T_j$.
 We investigate the location of the points $T_j$.
For each $j=2,\dotsc,\ell$, we defined $\mu_j\subset\Delta$ to be closure of the points in the interior of
$\Delta$ given by
 \begin{equation}
   \phi_1(y)\ =\ \dotsb\ =\ \phi_{j-1}(y)\ =\ 0\,.
 \end{equation}
This set is algebraic and has dimension $\ell{-}j{+}1$.

\begin{lemma}\label{L:Why_Assumption}
 Suppose that Assumption~$\ref{A:one}$ $(1)$ holds.
 If $P$ is a face of $\Delta$ whose interior meets $\mu_j$, then 
 the submatrix $\calB_{P,j-1}$ of $\calB$ consisting of the first $j{-}1$ columns of rows corresponding to the 
 facets of $\Delta$ containing $P$ annihilates a nonzero nonnegative row vector.
 When this occurs, we have  $P\subset \mu_j$.
\end{lemma}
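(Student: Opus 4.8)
The plan is to analyze how the logarithmic functions $\phi_1,\dots,\phi_{j-1}$ defining $\mu_j$ degenerate near the face $P$, using throughout that on the interior of $\Delta$ they cut out the same set as the polynomials in the polynomial form~\eqref{Eq:poly-form}, so that $\mu_j$ is a genuine algebraic subset of $\overline{\Delta}$. Fix notation: by Assumption~\ref{A:one}(1) the polyhedron $\Delta$ is simple, so if $r:=\codim P\ge 1$ then exactly $r$ of the affine forms, say $\Lambda_{i_1},\dots,\Lambda_{i_r}$, vanish on $P$, while every other form $\Lambda_i$ is strictly positive on the relative interior of $P$; the matrix $\calB_{P,j-1}$ is then the $r\times(j{-}1)$ matrix whose $(s,m)$ entry is $\beta^{(m)}_{i_s}$.

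\emph{The annihilation.} Let $q$ be a point of the relative interior of $P$ lying in $\mu_j$, and pick interior points $q_k\to q$ of $\Delta$ with $\phi_m(q_k)=0$ for $m=1,\dots,j{-}1$ (possible since $q$ lies in the closure of these interior points). Set $t^{(k)}_s:=-\log\Lambda_{i_s}(q_k)$. Because $\Lambda_{i_s}(q_k)\to\Lambda_{i_s}(q)=0^+$ we have $t^{(k)}_s\to+\infty$ for each $s$, whereas $\log\Lambda_i(q_k)\to\log\Lambda_i(q)$ stays bounded for the remaining indices $i$. Hence the equation $\phi_m(q_k)=\sum_i\beta^{(m)}_i\log\Lambda_i(q_k)=0$ rearranges to $\sum_{s=1}^r\beta^{(m)}_{i_s}t^{(k)}_s=O(1)$. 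Since $\|t^{(k)}\|\to\infty$, the normalized vectors $t^{(k)}/\|t^{(k)}\|$ lie on the unit sphere of $\R^r_{\ge 0}$; passing to a convergent subsequence gives a nonzero nonnegative row vector $v$ with $\sum_s\beta^{(m)}_{i_s}v_s=0$ for all $m$, i.e. $v\,\calB_{P,j-1}=0$, which is the first claim.

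\emph{That $P\subseteq\mu_j$.} I would reverse this construction. Let $v\ge 0$, $v\neq0$, annihilate $\calB_{P,j-1}$, and let $q'$ be an arbitrary point of $P$. As $\Delta$ is simple we may pick coordinates $(w,\varepsilon)$ near $q'$, with $w\in\R^{\ell-r}$ and $\varepsilon\in\R^r$, in which $\varepsilon_s=\Lambda_{i_s}$, so that the interior of $\Delta$ is $\{\varepsilon>0\}$ and $P=\{\varepsilon=0\}$. For a large parameter $\rho$ put $\varepsilon_s=\exp(-\rho v_s-c_s)$; then in each $\phi_m$ the divergent term $-\sum_s\beta^{(m)}_{i_s}(\rho v_s+c_s)$ loses its $\rho$-part, since $\sum_s\beta^{(m)}_{i_s}v_s=0$, leaving
\[
  \phi_m\ =\ -\sum_{s=1}^r\beta^{(m)}_{i_s}c_s\ +\ \sum_{i\notin\{i_1,\dots,i_r\}}\beta^{(m)}_i\log\Lambda_i(w,\varepsilon),
\]
a bounded expression that extends continuously across $\varepsilon=0$. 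Counting the free parameters $w$, $c$, and $\rho$ against the $j{-}1$ equations $\phi_1=\dots=\phi_{j-1}=0$, and using that the rational forms~\eqref{Eq:PGD} cut out $\mu_j$ as a complete intersection of the expected dimension $\ell{-}j{+}1$ (Proposition~\ref{P:Gale}), one solves these equations, by the implicit function theorem, for interior points of $\Delta$ on which $\phi_1=\dots=\phi_{j-1}=0$ that converge, as $\rho\to\infty$ (with $c_s\to\infty$ for the indices with $v_s=0$), to a prescribed point of $P$. Hence the relative interior of $P$ lies in $\mu_j$; as $\mu_j$ is closed and $P$ is the closure of its relative interior, $P\subseteq\mu_j$.

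\emph{Main obstacle.} The first half is a routine compactness-and-normalization argument. The real work is in the second half: one must verify that the residual bounded system $\phi_1=\dots=\phi_{j-1}=0$ is genuinely solvable near the face $\varepsilon=0$ — this is where the complete-intersection/expected-dimension input enters — and one must handle the case in which $v$ has zero coordinates, where a naive approach $\rho\to\infty$ would land on the smaller face cut out by the support of $v$ rather than on all of $P$; forcing the limit to be an arbitrary point of $P$ is what requires the auxiliary parameters $c_s$ (for $v_s=0$) together with $w$, and a careful dimension count.
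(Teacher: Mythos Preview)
Your forward direction is correct. The paper takes a slightly different route: it performs an analytic change of coordinates near $p$ absorbing the nonvanishing factors $\prod_{a>r}\Lambda_a^{\beta_a^{(i)}}$, so that the equations for $\mu_j$ become the pure monomial system $y_1^{\beta_1^{(i)}}\cdots y_r^{\beta_r^{(i)}}=1$, and then argues that this system has the origin in its closure exactly when it contains a monomial curve $(t^{\alpha_1},\dots,t^{\alpha_r})$. Your sequence-and-normalize argument reaches the same conclusion without the coordinate change.

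The second half has a genuine gap. With $\varepsilon_s=\exp(-\rho v_s-c_s)$ and $v_s=0$ for some $s$, you must send $c_s\to\infty$ to force $\varepsilon_s\to 0$; but then $-\sum_s\beta^{(m)}_{i_s}c_s$ diverges unless every $\beta^{(m)}_{i_s}$ with $v_s=0$ vanishes, which nothing guarantees. Your ``residual bounded system'' is therefore not bounded, and the implicit-function step collapses. Concretely, take $r=2$, $j=2$, $\calB_{P,1}=\binom{0}{1}$: the only nonnegative annihilator up to scale is $v=(1,0)$, the single equation reduces (after the paper's coordinate change) to $y_2=1$, and its closure misses $P$ entirely. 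So the bare existence of a nonnegative annihilator does \emph{not} imply $P\subset\mu_j$, and no amount of parameter counting will produce solutions approaching $P$ in that example. What saves the lemma is that under its actual hypothesis (the interior of $P$ meets $\mu_j$) a \emph{strictly positive} annihilator exists: in your setup, project the unnormalized vectors $t^{(k)}$---strictly positive, each coordinate $\to+\infty$, and within $O(1)$ of the left kernel of $\calB_{P,j-1}$---onto that kernel to obtain one. The paper exploits this differently, observing that after the local monomial reduction the condition ``origin lies in $\overline{\mu_j}$'' depends only on $\calB_{P,j-1}$ and not on the chosen interior point $p$; hence if it holds at one interior point of $P$ it holds at all, giving $P\subset\mu_j$ immediately.
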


By annihilating a nonzero nonegative row vector, we mean there is a nonzero vector $v=(v_1,\dotsc,v_r)$ with
$v_i\geq 0$ for all $i$ such that $v \calB_{P,j-1}=0$.
Here, $r$ is the number of facets containing $P$.

\begin{proof}
 Let $p$ be a point lying in the interior of $P$.
 After an affine change of coordinates and a reordering, we may assume that $p=0$ and the
 affine forms defining $\Delta$ are
\[
    y_1\,,\ y_2\,,\ \dotsc\,,\ y_r\ ,\  
    \Lambda_{r+1}(y)\,,\ \dotsc\,,\ \Lambda_{n+\ell+1}(y)\,,
\]
 where $\Lambda_a(0)>0$ for $a>r$.
 That is, points of $P$ have their first $r$ coordinates equal to 0.

 Then $\mu_j$ is defined on the interior of $\Delta$ by the rational functions
\[
   y_1^{\beta_1^{(i)}}\dotsb \,y_r^{\beta_r^{(i)}}\cdot
   \prod_{a=r+1}^{n+\ell+1} \Lambda_a(y)^{\beta_a^{(i)}}\ =\ 1
   \qquad\mbox{for}\quad i=1,\dotsc,j{-}1\,.
\]
 After an analytic change of coordinates in the neighborhood of the origin $p$, we may assume that 
 these become monomial,
\[
   y_1^{\beta_1^{(i)}}\dotsb\, y_r^{\beta_r^{(i)}}\ =\ 1
   \qquad\mbox{for}\quad i=1,\dotsc,j{-}1\,.
\]
 The solution set meets the origin $p$ if and only if it contains a monomial curve of the form
\[
   (t^{\alpha_1}\,,\, t^{\alpha_2}\,,\,\dotsc\,,\, t^{\alpha_r})
\]
 with each exponent $\alpha_i$ nonnegative and not all are zero.
 But then $\alpha=(\alpha_1,\dotsc,\alpha_r)$ satisfies
\[
   \sum_{t=1}^r \beta_t(i)\alpha_t\ =\ 0\qquad
   \mbox{ for } i=1,\dotsc,j{-}1\,,
\]
so that $\alpha$ is a nonzero nonnegative row vector annihilated by $\calB_{P,j-1}$.

 The last statement follows as these arguments may be reversed.
\end{proof}

\begin{corollary}\label{C:faceCharacterization}
 Suppose that Assumption~$\ref{A:one}$ holds.
 Then $\mu_j\cap\partial\Delta$ is the union of those codimension $j$ faces $P$ of $\Delta$ where 
 $\calB_{P,j-1}$ annihilates a nonzero nonnegative row vector.
\end{corollary}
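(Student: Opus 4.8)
The plan is to deduce Corollary~\ref{C:faceCharacterization} by combining Lemma~\ref{L:Why_Assumption} with a dimension count that forces the faces in question to have codimension exactly $j$. First I would fix a face $P$ of $\Delta$ whose relative interior meets $\mu_j$. By Lemma~\ref{L:Why_Assumption}, $\calB_{P,j-1}$ annihilates a nonzero nonnegative row vector, and moreover $P\subset\mu_j$; so $\mu_j\cap\partial\Delta$ is a union of (closed) faces $P$ of $\Delta$, each of which has the stated annihilation property. It remains to pin down the codimension of those faces: I claim each such $P$ has codimension exactly $j$ in $\Delta$.

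For the lower bound on codimension, suppose $P$ has codimension $r$ (so, by Assumption~\ref{A:one}(1), exactly $r$ affine forms vanish on $P$ and $\calB_{P,j-1}$ has $r$ rows). If $r<j$, then already the truncation $\calB_{P,r-1}$ — or indeed $\calB_{P,\codim(P)}$ in the notation of Assumption~\ref{A:one}(2) — would have to annihilate a nonzero nonnegative vector, since annihilating $\calB_{P,j-1}$ is a stronger condition than annihilating its first $r-1$ columns; this contradicts Assumption~\ref{A:one}(2). Hence $r\geq j$. For the upper bound, I would use the dimension statement for $\mu_j$ already established in the excerpt: $\mu_j$ has dimension $\ell-j+1$. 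If $P\subset\mu_j$ then $\dim P\leq\ell-j+1$; but that is too weak by one, so instead I argue on the boundary. A face $P$ contained in $\partial\Delta$ with $P\subset\mu_j$ cannot be a component of $\mu_j\cap\partial\Delta$ of full dimension $\ell-j+1$, because by Assumption~\ref{A:one}(3) the curve $\gamma_j=\calV_\Delta(\phi_1,\dotsc,\phi_{j-1},J_{j+1},\dotsc,J_\ell)$ is one-dimensional and the points $T_j=\gamma_j\cap\partial\Delta$ are finitely many; tracking through the incidence $\gamma_j\subset\mu_j$ and the fact that $\gamma_j$ must actually reach $\partial\Delta$ inside these faces forces $\dim P=\ell-j$, i.e. $\codim P=j$. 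Combining the two bounds gives $\codim P=j$ exactly.

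The converse inclusion is the easier direction: if $P$ is a codimension $j$ face with $\calB_{P,j-1}$ annihilating a nonzero nonnegative row vector, then by the final sentence of Lemma~\ref{L:Why_Assumption} (``these arguments may be reversed'') the relative interior of $P$ meets $\mu_j$, hence $P\subset\mu_j$, hence $P\subseteq\mu_j\cap\partial\Delta$. Assembling both inclusions yields the claimed equality $\mu_j\cap\partial\Delta=\bigcup\{P : \codim P=j,\ \calB_{P,j-1}\text{ annihilates a nonzero nonnegative row vector}\}$.

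I expect the main obstacle to be the upper bound $\codim P\le j$: Lemma~\ref{L:Why_Assumption} and Assumption~\ref{A:one}(2) only give $\codim P\ge j$, and ruling out larger faces sitting inside $\mu_j\cap\partial\Delta$ needs the genericity in Assumption~\ref{A:one}(3) together with the dimension count for $\mu_j$ — essentially the statement that on $\partial\Delta$ the set $\mu_j$ drops to its expected dimension and the extra Jacobian equations $J_{j+1},\dotsc,J_\ell$ cut it transversally. The bookkeeping matching ``codimension $j$ in $\Delta$'' with ``$j-1$ columns of $\calB$'' and ``dimension $\ell-j$ face'' must be done carefully, but once the codimension is nailed down the corollary is immediate from the lemma.
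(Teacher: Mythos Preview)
Your lower bound $\codim P\ge j$ via Assumption~\ref{A:one}(2) is correct and is exactly what the paper does (the paper phrases it as ``$\calB_{P,r}$ is a submatrix of $\calB_{P,j-1}$'', which is your observation that annihilating the first $j{-}1$ columns forces annihilating the first $r\le j{-}1$ of them). The trouble is entirely in your ``upper bound'' paragraph.

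First, the claim you set out to prove there is false as stated: it is not true that every face $P$ whose relative interior meets $\mu_j$ has codimension exactly $j$. If $Q$ is a codimension-$j$ face with $Q\subset\mu_j$, then every proper subface $P\subsetneq Q$ also has its relative interior contained in $\mu_j$, and $\codim P>j$. The corollary does not assert otherwise; it says $\mu_j\cap\partial\Delta$ equals the union of the \emph{closed} codimension-$j$ faces with the annihilation property, and those closed faces already absorb their own higher-codimension subfaces. So the right residual question is not ``why is $\codim P=j$?'' but ``why is every face in $\mu_j\cap\partial\Delta$ contained in one of the distinguished codimension-$j$ faces?''

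Second, your argument via $\gamma_j$ and Assumption~\ref{A:one}(3) is aimed at the wrong inequality. You spend it ruling out $\dim P=\ell{-}j{+}1$, i.e.\ $\codim P=j{-}1$, but that is already excluded by the lower bound you just proved. The Jacobians $J_{j+1},\dotsc,J_\ell$ defining $\gamma_j$ are irrelevant to $\mu_j$, and nothing in your paragraph touches the case $\codim P>j$.

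The paper's route is shorter and avoids this detour entirely. Its opening sentence is the dimension count you discarded as ``too weak by one'': since $\mu_j$ is the closure in $\Delta$ of a set of dimension $\ell{-}j{+}1$ lying in the interior of $\Delta$, no component of $\mu_j$ is contained in $\partial\Delta$, and hence $\mu_j\cap\partial\Delta$ has dimension $\ell{-}j$. That, together with $\codim P\ge j$ and the reversibility in Lemma~\ref{L:Why_Assumption}, is all the paper invokes --- Assumption~\ref{A:one}(3) and the curves $\gamma_j$ play no role in this corollary.
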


The points $T_j$ are the points of $\mu_j\cap\partial\Delta$ where the polynomials 
$\widetilde{J}_{j+1},\dotsc,\widetilde{J}_\ell$ vanish.
By Corollary~\ref{C:faceCharacterization}, this is the restriction of those $\ell{-}j$ polynomials to a
collection of faces, each of dimension $\ell{-}j$.

\begin{proof}
 Since $\mu_j$ has dimension $\ell{-}j{+}1$, its intersection with the boundary of $\Delta$ has
 dimension $\ell{-}j$.
 If this intersection meets the relative interior of a face $P$ of $\Delta$ of codimension $r$, then the matrix 
 $\calB_{P,j-1}$ annihilates a nonzero nonnegative row vector, by Lemma~\ref{L:Why_Assumption}.

 If $r<j$, then $\calB_{P,r}$ is a submatrix of $\calB_{P,j-1}$, and so $\calB_{P,r}$ annihilates a
 nonzero nonnegative row vector and contradicts our assumption.
 Thus $r\geq j$, and we see that the rest of the corollary follows from Lemma~\ref{L:Why_Assumption}. 
\end{proof}

Corollary~\ref{C:faceCharacterization} restricts the faces on which points of $T_j$ can lie and implies
the following tighter estimate,
 \begin{equation}\label{Eq:tighter}
    |T_j|\ \leq\ M_{\ell-j}(\Delta,\calB)\cdot 2^{\ell-j}n^{\ell-j}
    \qquad\mbox{for}\quad j>1\,,
 \end{equation}
where \defcolor{$M_{\ell-j}(\Delta,\calB)$} counts the faces $P$ of $\Delta$ with codimension
$j$ for which $\calB_{P,j-1}$ annihilates a nonzero nonnegative row vector.

\begin{proof}[Proof of Theorem~$\ref{T:newBound}$]
 Let $\Phi(x)=0$ be a system of $n$ Laurent polynomials in $n$ variables involving $n{+}\ell{+}1$
 monomials.
 Let $x^a$ be a monomial in $\Phi$ whose exponent vector $a$ lies in the interior of the
 convex hull of the other exponent vectors.
 We may replace $\Phi$ by $x^{-a}\Phi$ to obtain a system of Laurent polynomials with the same
 zeroes in $(\R^\times)^n$ having a constant term such that 0 lies in the interior of the convex hull of
 the exponent vectors of its remaining monomials.

 Rewrite $\Phi(x)=0$ as $F(x)=b$ where $F$ has $n{+}\ell$ monomials with exponent vectors $\calA$,
 and the origin lies in the interior of the convex hull of $\calA$.
 Thus the origin is a nonnegative integer combination of the vectors in $\calA$, with every vector in $\calA$
 participating in this expression.
 Consequently, this expression is a positive vector in the integer nullspace of $\calA$, and we may assume it is 
 the first column of the matrix $\calB$ of exponents for the Gale system.
 This implies that no $j\times (j{-}1)$ submatrix of the first $j{-}1$ columns of $\calB$ has a
 positive row vector in its kernel, and so $M_{\ell-j}(\Delta,\calB)=0$, when $j>1$.

 If the positive chamber $\Delta$ is bounded, then it has at most $n{+}\ell$ facets and 
 $T_2,\dotsc,T_\ell$ are empty.
 This gives the estimate
 \begin{eqnarray}\nonumber
   |S_\ell|&\leq& 
    |S_0|+\tfrac{1}{2}|T_1|\ \leq\ 
    2^{\binom{\ell}{2}}n^\ell + (n{+}\ell)2^{\binom{\ell-1}{2}-1}n^{\ell-1}\\
    &=&\Bigl(1+ 2^{-\ell}\bigl(1+\tfrac{\ell}{n}\bigr)\Bigr)
    2^{\binom{\ell}{2}}n^\ell\,. \label{Eq:smBd}
 \end{eqnarray}
 If $\Delta$ is unbounded, then, after the transformation of Remark~\ref{R:Bounded},
 $\Delta$ possibly has $n{+}\ell{+}1$ facets and the first column of $\calB$ has exactly one
 negative entry.

 To complete the proof, suppose further that the coefficient matrix $C$ of those
 other exponent vectors has no positive vector in its kernel.
 This is equivalent to $\Delta$ being bounded, and we have the bound~\eqref{Eq:smBd}.
\end{proof}

The sets $T_2,\dotsc,T_\ell$ are empty under the hypotheses of Theorem~\ref{T:newBound} and when the first
column of $\calB$ has every entry positive.
In this case, Assumption~\ref{A:one} (2) on $\calB$ is satisfied.

We noted in Remark~\ref{R:singular} that the points $T_j$ for $j\geq2$ will typically be singular
points of the curve $\gamma_j$ as $\mu_j$ is singular along the boundary of $\Delta$.
Minimizing the number of points~$T_j$ (or eliminating them altogether as in
Theorem~\ref{T:newBound}), reduces the difficult path tracking required to track from these singular
points $T_j$, thereby improving the performance of the Khovanskii-Rolle continuation algorithm.

%
\section{Algorithmic details for the Gale transform}\label{S:algos}
%

Obtaining the solutions of a fewnomial system by Khovanskii-Rolle continuation is a 3-step process.
 \begin{enumerate}
  \item Apply the Gale transform to a fewnomial system $F(x)=b$, producing a dual Gale system $G(y)=1$ of
    master functions. 
  \item Use Khovanskii-Rolle continuation to find approximations to elements of $S_G$ which are the real solutions
    of $G(y)=1$ in the positive chamber $\Delta \subset \mathbb{R}^\ell$.   
  These approximations form the set $S_G^*$.
  \item For each $s^* \in S_G^*$, transform $s^*$ to $t^*$, a numerical approximation to $t \in S_F$.  
  Set $S_F^* := \{ t^* \mid t^*\text{ is an approximation to } t \in S_F \}$.
 \end{enumerate}
A fewnomial system may be solved using Gale duality by other root-finding methods in place of  
Khovanskii-Rolle continuation in Step 2.
We provide pseudocode for the Gale transform of Step 1 (Algorithm~\ref{alg:wrapping}) and for the
transformation of the approximate solutions $S_G^*$ of the Gale system $G$ to the approximate solutions $S_F^*$ of
the fewnomial system $F$ in Step 3 (Algorithm~\ref{alg:unwrapping}).  
Two steps of Algorithm~\ref{alg:wrapping} require extra attention; they are  
detailed in the final two subsections. 

%
\subsection{Converting a fewnomial system to a system of master functions}\label{SS:Algo_GD}
%

Using the notation of Section~\ref{S:KhRo}, we give pseudocode for the basic algorithm for
converting a system $F(x)=b$ of fewnomials into a system of master functions $G(y)=1$.

\begin{algorithm}\label{alg:wrapping}\mbox{\ }

\noindent
{\bf Input}:  $\ell, n$, an $n\times (n{+}\ell)$ integer matrix $\calA$, an
$n\times(n{+}\ell)$ coefficient matrix $C$, and target column vector $b$.

\noindent
{\bf Output}:  A system $G(y)=1$ of $\ell$ master functions in $\ell$ variables that is Gale dual to the system
$F(x)=b$. 

\begin{algorithmic}

\STATE 1) Choose $\ell$ monomials and reorder the columns of $C$ so that these monomials are the last $\ell$ columns.
\STATE 2) Put $[C\mid b]$ in echelon form, obtaining a matrix $[-I_n\mid L\mid\lambda]$.  
\STATE 3) From the rows of $L$ and entries of $\lambda$ form the affine forms $L_i(y)+\lambda_i$.
\STATE 4) Choose $\ell$ integral basis vectors $\beta_i$ of the nullspace of $\calA$ to construct
          \mbox{$\calB = \{\beta^{(1)}, \dotsc, \beta^{(\ell)}\}$}.
\STATE 5) Use $\calB$ and the forms $L_i(y)+\lambda_i$ to form the Gale dual system
  $G(y)=1$ as in~\eqref{Eq:PGD}. 

\end{algorithmic}
\end{algorithm}

We explain some of details in the design and implementation of each step of Algorithm~\ref{alg:wrapping}, and 
then illustrate it with an example.
 \begin{itemize}
 \item[1)]  The effect of this choice is an affine change of coordinates in $\R^\ell$, which affects the shape of
   the positive chamber $\Delta\subset\R^{\ell}$ and the performance of the Khovanskii-Rolle continuation when no
   scaling of the Gale system functions occur. 
   With a simple scaling routine, discussed in Section~\ref{S:software}, the parameterization of the monomials
   appears to not matter when $\ell=2$.

\item[2)]  We simply use Gaussian elimination with partial pivoting.

\item[3)]  If the space $\R^{n+\ell}$ has coordinates $z\in\R^n$ and $y\in\R^\ell$, the
      echelon form $[-I_n\mid L\mid\lambda]$ gives a parameterization of the set $C(z,y)=b$ by the
      affine forms $z_i=L_i(y)+\lambda_i$ for $i=1,\dotsc,n$.

\item[4)]  We use the LLL algorithm~\cite{LLL} to compute an initial basis $\calB$ and then
       find vectors in the row space with few negative entries, a heuristic to minimize 
       $M_{\ell-j}(\Delta,\calB)$~\eqref{Eq:tighter},  which is the number of codimension $j$ faces of $\Delta$ on
       which the points $T_j$ may lie. 
       This step is the focus of Section~\ref{S:null}.

\item[5)]  If we set $L_{n+j}(y):=y_j$ and $\lambda_{n+j}:=0$ for $j=1,\dotsc,\ell$, then the
  vectors in $\calB$ and affine functions $L_i(y)+\lambda_i$ for $i=1,\dotsc,n{+}\ell$ 
      together give the Gale system $G(y)=1$ as in~\eqref{Eq:PGD} in Proposition~\ref{P:Gale}.
\end{itemize}

\begin{example}\label{Ex:example_positive}
Consider the fewnomial system, where $n=5$ and $\ell=2$ 
\begin{equation}\label{Eq:Sept_Few}
 \begin{array}{rcl}
-a^{-1}b^2c^2d \ +\hspace{18pt}\left(\frac{1}{2} b^2c\ +\ 2b^{-4}c^{-7}d^{-5}e^{-1}\ -\ 1\right)&=&0\,,\rule{0pt}{14pt}\\
-ac          \ +\hspace{12.8pt}\frac{1}{2}(\frac{1}{4}b^2c\ -\ \ \: b^{-4}c^{-7}d^{-5}e^{-1}\ +\ 1)&=&0\,,\rule{0pt}{14pt}\\
-bc^4d^4     \ +\ \frac{1}{4}(-\frac{1}{4}b^2c\ -\ 3b^{-4}c^{-7}d^{-5}e^{-1}\ +\ 6)&=&0\,,\rule{0pt}{14pt}\\
-d           \ +\ \frac{1}{2}(-\frac{3}{4}b^2c\ -\ 2b^{-4}c^{-7}d^{-5}e^{-1}\ +\ 8)&=&0\,,\rule{0pt}{14pt}\\
-e           \ +\hspace{10.8pt}(-\frac{1}{2}b^2c\ +\ 2b^{-4}c^{-7}d^{-5}e^{-1}\ +\ 3) &=&0\,,\rule{0pt}{14pt}
 \end{array}
\end{equation}
with support 
$\{ a^{-1}b^2c^2d\,,\, ac\,,\, bc^4d^4\,,\, d\,,\, e\,,\, b^2c\,,\,b^{-4}c^{-7}d^{-5}e^{-1}\}$
and thus matrix of exponents
\[
 \calA\ =\  \left[
   \begin{array}{rrrrrrr}
   -1 & 1 & 0 & 0 & 0 & 0 &  0 \\
    2 & 0 & 1 & 0 & 0 & 2 & -4 \\
    2 & 1 & 4 & 0 & 0 & 1 & -7 \\
    1 & 0 & 4 & 1 & 0 & 0 & -5 \\
    0 & 0 & 0 & 0 & 1 & 0 & -1 \\
   \end{array}
  \right]
\]
and coefficient matrix
\[
  [C\mid b]\ =\ \left[
  \begin{array}{rrrrrcc|c}
    -1 & 0 & 0 & 0 & 0 &  1/2  &  2  & -1\\ 
     0 &-1 & 0 & 0 & 0 &  1/8  &-1/2 & 1/2\\
     0 & 0 &-1 & 0 & 0 & -1/16 &-3/4 & 3/2\\
     0 & 0 & 0 &-1 & 0 & -3/8  & -1  & 4\\
     0 & 0 & 0 & 0 &-1 & -1/2  &  2  & 3\\
  \end{array}
  \right]\,
\]
where the first seven columns correspond to the seven monomials in $\calA$
and the final column corresponds to the constants.  

We presented this fewnomial system with a coefficient matrix in echelon form for the given order of the
monomials, so Steps 1 and 2 of Algorithm~\ref{alg:wrapping} are complete.  
If we set $s:=b^2c$ and $t:=b^{-4}c^{-7}d^{-5}e^{-1}$, the rows of this echelon matrix express each of the first
five monomials as affine functions of the parameters $s$ and $t$, so that, for example,
$a^{-1}b^2c^2d = \frac{1}{2}s + 2t - 1$.  
We have the seven $(=5+2)$ affine forms,
 \begin{equation}\label{lins}
  \begin{array}{rcl}
   \Lambda_1(s,t) &=& \frac{1}{2}s+2t-1\rule{0pt}{14pt}\\
   \Lambda_2(s,t) &=& \frac{1}{8}s-\frac{1}{2}t+\frac{1}{2}\rule{0pt}{14pt}\\  
   \Lambda_3(s,t) &=& -\frac{1}{16}s - \frac{3}{4}t+\frac{3}{2}\rule{0pt}{14pt}\\
   \Lambda_4(s,t) &=& -\frac{3}{8}s-t+4\rule{0pt}{14pt}\\
   \Lambda_5(s,t) &=& -\frac{1}{2}s+2t+3\rule{0pt}{14pt}\\
   \Lambda_6(s,t) &=& s\rule{0pt}{14pt}\\
   \Lambda_7(s,t) &=& t\rule{0pt}{14pt}
  \end{array}
 \end{equation}

Given a vector $\beta=[\beta_1,\ldots,\beta_7]^T$ in the nullspace of $\calA$, we 
have $\prod_{i=1}^7(x^{a_i})^{\beta_i}=1$, where $a_i$ is the $i^{\rm th}$ exponent vector in $\calA$.
Under the substitution $x^{a_i}=\Lambda_i(s,t)$, we obtain 
$\prod_{i=1}^7\Lambda_i(s,t)^{\beta_i}=1$.
One choice of basis $\calB$ for the nullspace of $\calA$ is
\[
   \{ [4, 4, 2, 3, 3, 1, 3]^T\,,\ [-1, -1, 2, -2, 1, 2, 1]^T\}\,.
\]
This gives the system of master functions (written in the polynomial form~\eqref{Eq:poly-form})
 \begin{equation}
  \begin{array}{rcl}
     g(s,t) &=& (s{+}4t{-}2)^4(s{-}4t{+}4)^4(s{+}12t{-}24)^2(3s{+}8t{-}32)^3(s{-}4t{-}6)^3st^3\\
    &&\hskip3.5in-68719476736,\\\rule{0pt}{13pt}
   f(s,t) &=& -2(s{+}12t{-}24)^2(s{-}4t{-}6)s^2t\ -\ (s{+}4t{-}2)(s{-}4t{+}4)(3s{+}8t{-}32)^2\,.
 \end{array}
 \end{equation}
Figure~\ref{fig:masters} displays the algebraic curves in $\R^2$ defined by these polynomials,
together with the lines defined by the affine forms $\Lambda_i(s,t)=0$ for $i=1,\ldots,7$.
\begin{figure}[htb]
\begin{picture}(225,187)(-15,0)
  \put(0,0){\includegraphics[width=210pt]{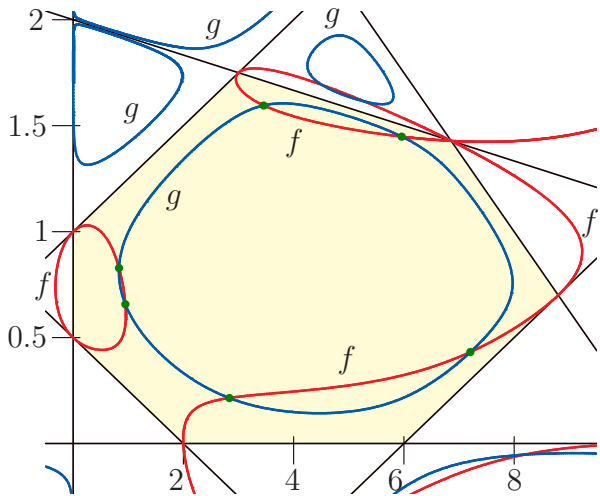}}
  \put(-15,56){$0.5$}  \put(-5,96){$1$} \put(-15,136){$1.5$}  \put(-7,176){$2$}
  \put(46,2){$2$}  \put(90,2){$4$} \put(129,2){$6$}  \put(174,2){$8$}
  \put(-5,76){$f$} \put(110,48){$f$} \put(90,130){$f$} \put(202,100){$f$}
  \put(29,143){$g$} \put(45,110){$g$} \put(60,175){$g$}\put(105,179){$g$}
 \end{picture}
\caption{Master function curves in the septagon}
\label{fig:masters}
\end{figure}

The original system~\eqref{Eq:Sept_Few} has 95 complex solutions with 11 real and six in the positive
orthant. 
The hypotheses of Theorem~\ref{T:newBound} hold; with $n=5$ and $\ell=2$, its improved bound is $87$ by
Theorem~\ref{T:newBound} and $67$ using~\eqref{Eq:FBoundNoSlack}. 
These hypotheses imply that the curve $g=0$ does not meet the boundary of the septagon $\Delta$.

\end{example}                                    

%
\subsection{Transforming Gale system solutions to fewnomial solutions}\label{SS:Algo_sols}
%

The output of Khovanskii-Rolle continuation is a set \defcolor{$S_G^*$} of approximations to solutions $S_G$ of the
Gale system in the positive chamber $\Delta$.  
Converting these points of $\Delta$ to points \defcolor{$S_F^*$} of $\R^n_>$ that are
approximations to solutions $S_F$ of the fewnomial system is problematic, for there is no
natural map between points of $\Delta$ and $\R^n_>$, except on the solutions to the two systems. 
We present a simple method to accomplish this conversion.

Let $s^* \in S_G^*$ and suppose that $s$ is the point of $S_G$ that $s^*$ approximates.
Applying the map $\psi$~\eqref{Eq:psi} of Subsection~\ref{SS:Geometry} whose components are the
affine functions $L_i(y)+\lambda_i$ gives points $\psi(s^*)$ and $\psi(s)$ in $\R^{n+\ell}_>$ that
lie on the affine plane $H$.
The point $\psi(s)$ lies on $\varphi_\calA(\R^n_>)$, but the point $\psi(s^*)$ 
does not lie on $\varphi_\calA(\R^n_>)$ (unless $s^*\in S_G$), as the former is a solution of the Gale system,
while the latter typically is not.  

To remedy this, choose a square submatrix $\calA'$ of rank $n$ of the matrix $\calA$ of exponents.
Projecting $\R^{n+\ell}$ to those coordinates indexed by the columns of $\calA'$ gives a map
\mbox{$\pi_{\calA'}\colon\R^{n+\ell}\to\R^n$}.
The composition
\[
   \R^n_>\ \xrightarrow{\ \varphi_{\calA}\ }\ 
   \R^{n+\ell}_>\ \xrightarrow{\ \pi_{\calA'}\ }\ \R^n_>
\]
is the map $\varphi_{\calA'}$, which is invertible on $\R^n_>$.
We set
\[
   \defcolor{t^*}\ :=\ \varphi_{\calA'}^{-1}\circ \pi_{\calA'}\circ \psi(s^*)\,.
\]
Since $t=\varphi_{\calA'}^{-1}\circ \pi_{\calA'}\circ \psi(s)=\varphi_{\calA}^{-1}\circ \psi(s)$
is the solution to the fewnomial system, and these maps are differentiable,
we expect that $t^*$ is close to $t$.
We formalize this transformation.

\begin{algorithm}\label{alg:unwrapping}\ 

\noindent
\mbox{{\bf Input:}} The exponent matrix $\calA$ and affine functions $L_i(y)+\lambda_i$ from 
       Algorithm~\ref{alg:wrapping}, and the set $S_G^*=\{s_i^* \mid i=1,...,N\}$ of
      approximations to solutions of the Gale system $G$ in the positive chamber $\Delta$.

\noindent
\mbox{{\bf Output:}}  A set $S_F^*=\{t^*_i\mid i=1,...,N\}$ of approximations to positive solutions of
the original fewnomial system $F$.

\begin{algorithmic}
\STATE  Choose a full rank $n\times n$ submatrix $\calA'$ of the matrix $\calA$ and compute
      $(\calA')^{-1}$. 

\FOR{$i := 1$ to $N$}
  \STATE a) Evaluate the map $\psi$ at the point $s^*_i$ to get the point $z^*_i\in\R^{n{+}\ell}_>$,
  \[
     z_i^* :=\ (L_1(s_i^*)+\lambda_1,\dotsc,L_{n+\ell}(s_i^*)+\lambda_{n+\ell})\,.
  \]
  \STATE b) Project $z_i^*$ to $\R^n_>$ using $\pi_{\calA'}$, set $v^*_i:=\pi_{\calA'}(z^*_i)$.
  \STATE c) Let $w^*_i\in\R^{n}$ be the coordinatewise logarithm of $v^*_i$.
  \STATE d) Set $x^*_i:= (w^*_i(\calA')^{-1})^T$ and let $t^*_i$ be the coordinatewise exponentiation
  of $x^*_i$.
  \ENDFOR

\end{algorithmic}
\end{algorithm}

Note that the homeomorphism $\varphi_{\calA'}\colon\R^n_>\to\R^n_>$ becomes the linear isomorphism
induced by $x\mapsto \calA'x$ in logarithmic coordinates.
Steps c) and d) invert this transformation.

Even if $s^*_i$ is certified to be an approximate solution to the Gale
system with associated solution $s_i$ (i.e.\ $s^*_i\to s_i$ under Newton
iterations, doubling the number of significant digits with each iteration), then it need not be the
case that $t^*_i$ is an approximate solution to the fewnomial system with associated solution
$t_i:= \varphi_{\calA}^{-1}\circ\psi(s_i)$.
However, we may check this using, for example, the software 
{\tt alphaCertified}~\cite{HS}.
If $t^*_i$ is not certified to be an approximate solution, then we could refine $s^*_i$ to be closer
to $s_i$ and reapply the transformation.
Our software has the functionality to do this, either producing a soft certificate (all calculations
done in floating point arithmetic) or a hard certificate (calculations done with rational
arithmetic), which is a proof that $t^*_i$ is an approximate solution.

\begin{example}
An approximation to a solution of $f(s,t)=g(s,t)=0$ from Example~\ref{Ex:example_positive}
lying in the positive chamber is $s^*_1=( 0.94884808, 0.65721633)$.
We follow the steps of Algorithm~\ref{alg:unwrapping} to recover a solution of the original fewnomial system.

We first choose an invertible submatrix of $\calA$.  The first $5 (= n)$ columns of $A$ suffice as these are linearly independent.  
Evaluating the map $\psi$ at $s^*_1$ gives the point 
\[
   z^*_1\ =\ \left[\begin{array}{r}
  0.78885671 \\
  0.28999784 \\
  0.94778474 \\
  2.98696564 \\
  3.84000863 \\
  0.94884808 \\
  0.65721633 \end{array}\right]\,.
\] 
 Projecting to the first $5$ coordinates of $z^*_1$ and taking coordinatewise logarithms yields
\[
  w^*_1\ =\  \left[\begin{array}{r}
 -0.23717058 \\ 
 -1.23788180 \\ 
 -0.05362787 \\ 
 1.09425803 \\ 
 1.34547461
 \end{array}\right]\,.
\] 
Proceeding to the next step, we evaluate 
\[ 
   x^*_1\ =\  (w^*_1(\mathcal{A}'^{-1}))^T\ =\ \left[\begin{array}{r}
  0.020520123 \\ 
  0.60294767 \\ 
  -1.25840192 \\ 
  1.09425803 \\ 
  1.34547461\end{array}\right]\,.
\]
Finally,
\[
  t^*_1\ =\ \exp(x^*_1)\ = \ 
  \left[\begin{array}{r}
  1.0207321 \\
  1.8274977 \\ 
  0.28410769 \\ 
  2.9869656 \\ 
  3.8400086\end{array}\right]\,.
\]
We carry out Algorithm~\ref{alg:unwrapping} with $s_1^{*}$ starting with $135$ bits of precision for the input and truncate the output to an approximation that can be stored in one $64$-bit floating point number.  

Using {\tt alphaCertified}~\cite{HS}, 
we perform two Newton iterations on this approximation, doubling the precision each time.
We obtain a soft-certification of this refined approximation, and then convert the numerical 
approximation to its rational form and obtain a hard certification that proves that $t_1^{*}$ converges to $t_1$.
\end{example}

%
\subsection{Heuristic for the nullspace basis}\label{S:null}

A choice in the Gale transform that affects the efficiency of numerical tracking in the
Khovanskii-Rolle algorithm is the selection of the basis $\calB$ of the annihilator of $\calA$.  

Observe that the points of $T_j$ for $j\geq 2$ are often singular points of the curve $\gamma_j$.
For efficiency and numerical stability, we want to avoid tracking near these points. 
Corollary~\ref{C:faceCharacterization} implies that the curve $\gamma_j$ may approach a codimension $j$
face $P$ of $\Delta$ only when $\calB_{P,j-1}$ annihilates a nonzero nonnegative row vector.
By Corollary~\ref{C:basicChange}, changing the basis elements $\beta^{(j)}$ of $\calB$ will change the Gale system
as well as the curves $\gamma_j$, without changing the solutions.
Ideally, we would use this freedom to choose the basis $\calB$ of the annihilator of $\calA$ to minimize the
number of faces of $P$ on which points $T_j$ for $j\geq 2$ could lie.

The number of possible choices for $\calB$ up to patterns for the signs of the coordinates suffers from a
combinatorial explosion~\cite{joseph60} as $\ell$ increases.  
Consequently, this optimization problem may become infeasible for $\ell$ large.
Instead, we find elements of the annihilator of $\calA$ that have the fewest number of negative coordinates.
We expect that this choice will tend to minimize the number of faces on which the points $T_j$ could lie.
In the case of Theorem~\ref{T:newBound}, this choice achieves the minimization.
When there is a positive vector annihilating $\calA$, we may choose $\calB$ to consist only of positive vectors,
and so no submatrix annihilates any nonzero nonnegative row vector.
By Lemma~\ref{L:Why_Assumption}, no face of $P$ will contain points of $T_j$. 

Once a basis for the annihiliator of $\calA$ is known, the algorithm proceeds differently depending upon the size
of $\ell$.
For small $\ell$, we can enumerate all possible patterns for the signs
of the coordinates of vectors in the annihilitor of $\calA$, and then choose a basis $\calB$ consistsing of
vectors having the fewest number of negative coordinates.  
When $\ell$ is large, we take random integer linear combinations of a basis of the annihilator of $\calA$ and
choose of these generated vectors those with the minimum number of negative entries.

This heuristic does not always work in parctice, as we observed in our test suite described below.
One problem is when a point of some $S_j$---a solution to an intermediate problem---lies close to
the boundary of $\Delta$.
Such solutions may be numerically ill-conditioned, and the subroutine in the Gale duality software package that
tracks the curves $\gamma_j$ near the boundary of $\Delta$ (the monomial tracker described
in~\cite[\S~4.3]{BS10}) may miss such ill-conditioned solutions.
A future publication~\cite{BHNS} will focus on the design and performance of the monomial tracker~subroutine.

%
\section{Software}\label{S:software}

The algorithms of Section~\ref{S:algos} are implemented in a software package, {\tt galeDuality}~\cite{gd}, which
is available at each author's website and written in {\tt C++}.
This will be the front end for a package we are developing for Khovanskii-Rolle continuation.
We describe a test suite~(\ref{S:testsuite}) to evaluate the proposed heuristic in Section~\ref{S:null}, 
and some implementation details of the package~(\ref{SS:software}).

\subsection{Test suite}
\label{S:testsuite}
As noted in Sections~\ref{S:KhRo} and~\ref{S:bound}, Khovanskii-Rolle continuation finds the solutions $S_\ell$
to a Gale system in a polyhedron $\Delta$ using arclength continuation along curves $\gamma_j$ starting at points
$\defcolor{T_0}:=S_0,T_1,\dotsc,T_\ell$ where $T_j$ are solutions to $\ell-j$ Jacobian determinants (having
B\'ezout number $2^{\binom{\ell-j}{2}}n^{\ell-j}$) in certain codimension $j$ faces of $\Delta$.
The set $T_0$ lies in the interior of the polyhedron $\Delta$ and points of $T_1$ could lie in every facet, but
the other $T_j$ lie in only a subset of the faces of codimension $j$.
The heuristic of Subsection~\ref{S:null} is intended to minimize or eliminate the number of faces contributing to
$T_j$ for $j\geq 2$.

To test this heuristic, we created a test
suite of Gale systems for each $(\ell, n)$ pair with $\ell=2$ and $n=2,\dotsc,13$.

For each $n$, we generated an $(n{+}2)$-gon by successively selecting random rational points in an annulus until we had 
$n{+}2$ points in convex position. These points formed the vertices of the $(n{+}2)$-gon.
The edges of this polygon then give $n{+}2$ affine forms with relatively prime integer coefficients.
To create a Gale system, we generated two vectors of exponents whose components correspond to the affine forms.
Both had integer components with absolute values selected uniformly at random in the range $1,\dotsc,10$.
One vector, $\beta^+$, had all components positive, but the other, $\beta^\pm$, had signs alternating as much as
possible traversing the polygon cyclically.  
This was intended to emulate the configuration of the master function curves in Figure~\ref{fig:masters}.

Our test suite consists of $100$ random Gale systems for each $n$.
For each random system, we conducted two types of runs of Khovanskii-Rolle continuation, one for each ordering of
the exponent vectors $\beta^+$ and $\beta^\pm$ as columns in $\calB$.
When $\beta^+$ is the first column of $\calB$, the set $T_2$ is empty, by Theorem~\ref{T:newBound}, but when $\beta^\pm$ is the first column,
$T_2$ consists of the vertices defined by affine forms such that the components of $\beta^\pm$ had differing signs.
Our proxy to test for the affect of the heuristic in Section~\ref{S:null} was to compare the average of $10$ running times for
these two different choices.

For any fixed $n$, in a head-to-head comparison of the average run-time of Khovanskii-Rolle continuation for the two different choices, 
choosing $\beta^+$ as the first column of $\calB$ outperformed $\beta^\pm$ for $>92\%$ of the systems in the test suite.  
For the majority of the instances in which the choice of $\beta^\pm$ outperformed $\beta^+$, 
the difference between the run times was insignificant.  

In about $15\%$ of all the systems in the test suite, the choice of $\beta^+$ in
the current implementation of the Khovanskii-Rolle continuation algorithm yielded incorrect results.
This occurs because some curve $\gamma_j$ is close to the boundary of $\Delta$.  
The monomial tracker subroutine fails to find some point $S_j$ when $S_j$ is on the curve $\gamma_j$ 
when $\gamma_j$ is being approximated as a monomial curve near the boundary of $\Delta$.  
As $n$ increased, we observed that this event occurred more frequently.
We believe this is due to the inadequacy of the monomial tracker in the proof-of-concept implementation released
with~\cite{BS10}, and improving that is a~future~project.

In addition, since the curves $\gamma_j$ are defined by the vanishing of a combination of Jacobian determinants and master functions, 
all these functions may have large coefficients, as seen in the examples we gave.
This affects the efficiency of the arclength continuation and computation of the points~$T_j$.
Using the logarithmic form of the master functions mitigates their possible contribution to this inefficiency.
For the Jacobian factors, we propose a simple scaling heuristic that is a na\"ive version of
SCLGEN~\cite{MM_scl87}: 
divide each Jacobian by the average of the largest and smallest absolute values of its coefficients.
Scaling serves a secondary role in that it is sometimes necessary to precondition the systems that precompute the
points~$T_j$.   
Without this scaling, we were not always able to compute some of the starting points that lie in the boundary of
$\Delta$ in some of the examples of our test suite.  

Timing of the Khovanskii-Rolle continuation algorithm, information on the number of points found in $T_j$, 
and the number of real solutions for each of the systems in the test suite is archived with our software,
{\tt galeDuality}~\cite{gd}.

\subsection{galeDuality}
\label{SS:software}

Our software package, {\tt galeDuality}, is an open-source program written in {\tt C++} and accepts as input either
a fewnomial system or a Gale system. 
When given a fewnomial system, the software symbolically computes the Gale transform and saves the Gale system to a
file, using the algorithms of Section~\ref{S:algos}, for arbitrary $\ell$.  
Similarly, when given a Gale system, {\tt galeDuality} will transform the system of master functions
into its dual fewnomial system and save the fewnomial system to a file.

If the polyhedron $\Delta$ is unbounded, the software applies the projective transformation of Proposition~\ref{Prop:proj}.
When $\ell = 2$, {\tt galeDuality} calls the maple script of~\cite{BS10} to solve the Gale system using
Khovanskii-Rolle continuation.

After the Khovanskii-Rolle continuation computation is finished, 
{\tt galeDuality} will convert the approximations to solutions of the Gale system back to 
approximations to solutions of the fewnomial system using Algorithm~\ref{alg:unwrapping}, and then call 
{\tt alphaCertified}, if desired by the user, to guarantee that the approximations converge to solutions.  
If the user wishes, {\tt galeDuality} will also certify the approximations in $\Delta$ found by the
Khovanskii-Rolle continuation algorithm.
   
Finally, software and a user manual with more detailed information is available online from each of the authors'
websites.   

%
\bibliographystyle{plain}

\end{document}